\theoremstyle{plain}
\newtheorem{theorem}{Theorem}[section]
\newtheorem*{theorem*}{Theorem}
\newtheorem{lemma}[theorem]{Lemma}
\newtheorem{proposition}[theorem]{Proposition}
\newtheorem{corollary}[theorem]{Corollary}
\newtheorem{conjecture}[theorem]{Conjecture}
\theoremstyle{definition}
\newtheorem{remark}[theorem]{Remark}
\newtheorem*{remark*}{Remark}
\newtheorem{definition}[theorem]{Definition}
\newcommand{\abs}[1]{\left| #1 \right|}
\newcommand{\of}[1]{\left( #1 \right)}
\DeclareMathOperator{\pl}{\mathrm{pl}}
\DeclareMathOperator{\mG}{\mathcal{G}}
\newcommand{\set}[1]{\left\{ #1 \right\}}
\begin{document}

\setstretch{1.27}

\title{Chromatic polynomial evaluation spectra}

\author[Rafael Miyazaki]{Rafael Miyazaki\:\orcidlink{0009-0008-4548-6290}}
\author[Cosmin Pohoata]{Cosmin Pohoata\:\orcidlink{0009-0004-4842-2939}}
\author[Michael Zheng]{Michael Zheng\:\orcidlink{0009-0001-2406-6130}}
\address{Department of Mathematics, Emory University, Atlanta, GA, 30322, USA}
\email{\{rafael.kazuhiro.miyazaki, cosmin.pohoata, xzhe226\}@emory.edu}

\maketitle

\medskip
\begin{abstract}
Around 10 years ago, Agol and Krushkal showed that the number of chromatic polynomials $P_{G}$ arising from graphs $G$ on $n$ vertices grows exponentially with $n$, by establishing that the (dual) flow polynomial $F_{G}\left(\frac{3+\sqrt{5}}{2}\right)$ already takes on exponentially many values, if one varies $G$ over all planar cubic graphs $G$ on $n$ vertices. 

We show, more generally, that the size of the set $\{P_G(q): |V(G)|=n\}$ is exponential in $n$, for every fixed real number $q \neq 0,1,2$. In fact, our approach can also be pushed to show that $P_{G}(q)$ already takes on exponentially many values, if we only vary $G$ over all planar graphs on $n$ vertices. The case $q=3$ confirms a conjecture of Agol, which was initially motivated by the {\sf NP}-completeness of planar $3$-colorability. 
\end{abstract}

\section{Introduction} \label{sec:intro}

A \emph{proper $q$--coloring} of a finite undirected graph $G=(V,E)$ is an assignment
$\varphi:V\to[q]$ such that adjacent vertices receive different colors.
When $q\in\mathbb{Z}_{\ge 1}$, the number of proper $q$--colorings is denoted $P_G(q)$, and it is a classical theorem of Birkhoff and Whitney that there is a unique polynomial $P_G(x)\in\mathbb{Z}[x]$ which encodes the number of proper $q$-colorings of $G$ when evaluated at $x=q$. This is the so-called {\it{chromatic polynomial}} of graph $G$, a polynomial of degree $|V|$ and with leading coefficient equal to $1$. This object was first introduced by Birkhoff~\cite{Birkhoff1912} in 1912 as part of an algebraic approach for the Four Color Problem. Birkhoff’s original motivation was to study colorings of planar maps by encoding them in a polynomial and analyzing its behavior at small integer values, particularly at $x=4$. Although this approach did not yield a proof of the Four Color Theorem, it initiated a rich line of research and established the chromatic polynomial as a fundamental graph invariant. Whitney~\cite{Whitney1932} subsequently extended Birkhoff’s ideas to general graphs and introduced many of the structural properties of chromatic polynomials that are now standard. Among these is the deletion--contraction identity: for any edge $e$ of $G$,
\begin{equation}
    P_G(x)=P_{G-e}(x)-P_{G/e}(x), \label{eq:dc-intro}
\end{equation}
where $G-e$ denotes deletion and $G/e$ denotes contraction.
This identity places chromatic polynomials within the broader framework of the Tutte polynomial~\cite{Tutte1954}.

The chromatic polynomial encodes substantial information about $G$.
For instance, the chromatic number satisfies
\[
\chi(G)=\min\{q\in\mathbb{Z}_{\ge 1}: P_G(q)>0\}.
\]
Evaluations at negative integers also have a rich combinatorial meaning:
Stanley~\cite{Stanley1973} showed that
\[
|P_G(-1)|=\#\{\text{acyclic orientations of }G\},
\]
and more generally his chromatic reciprocity theorem interprets $(-1)^{|V|}P_G(-m)$, for $m\in\mathbb{Z}_{\ge 1}$,
as a weighted count of acyclic orientations together with order-preserving maps to $[m]$. 

\medskip

\noindent {\bf{Chromatic polynomial spectrum.}} In \cite{AKTutte}, Agol and Krushkal proved that the number of chromatic polynomials of planar triangulations on $n$ vertices grows exponentially with $n$. In a few words, their argument works with duals of planar cubic graphs and considers the number of distinct values taken by the \emph{flow polynomial} at the special value
$Q=(3-\sqrt5)/2$ (for a planar graph $G$, its flow polynomial is precisely $P_{G^{*}}(x)$, where $G^{*}$ is the dual of $G$). At this particular parameter $Q$, the chromatic algebra (a certain algebra over $\mathbb{C}[Q]$ formed from linear combinations of isotopy classes of planar graphs in a rectangle with a fixed number of univalent points on top and bottom) satisfies an additional local relation (coming from Temperley--Lieb / Jones--Wenzl theory),
which collapses a natural boundary module from dimension three to dimension two.
Two explicit local operations on planar graphs $G$ then act on $P_{G^{*}}(Q)$ by $2\times2$ matrices whose squares generate a free semigroup,
yielding exponentially many distinct evaluations and hence exponentially many distinct chromatic polynomials.
See~\cite{AKFlow,AKTutte} for more details and further references. 

In this paper we study the asymptotics of the \emph{evaluation spectra} of the chromatic polynomial. First, let $\mG_n$ be the set of all simple graphs on $n$ vertices. For each fixed real number $q$ and integer $n\ge 1$, we can now define
\[
\mathsf{S}_n(q)\;\coloneqq \;\{\,P_G(q)\,:\,G \in \mG_n\,\}.
\]
In words, $\mathsf{S}_n(q)$ represents the set of distinct values attained by $P_G(q)$ as $G$ ranges over all simple graphs on $n$ vertices. Furthermore, let 
\begin{equation*}
    \mathsf{S}_n^{\pl}(q) \; \coloneqq \; \{\, P_G(q) \, : \, G \in \mG_n^{\pl} \, \}
\end{equation*}
where $\mG_n^{\pl}$ is the set of all $n$--vertex, simple, planar graphs. Trivially, $\abs{\mathsf{S}_n(q)} \geq \abs{\smash{\mathsf{S}_n^{\pl}}(q)}$. 

\medskip
\noindent\textbf{Our results.}
We establish lower bounds for $\mathsf{S}_n(q)$ and $\mathsf{S}_n^{\pl}(q)$ for arbitrary fixed $q$. For integer $q\ge 3$ the value $P_G(q)$ counts proper $q$--colorings, and Agol conjectured that the spectrum $\mathsf{S}_n^{\pl}(3)$ should also grows exponentially over $\mG_n^{\pl}$; see~\cite{TreumannMO} for more context, in particular a motivation coming from the {\sf NP}-completeness of $3$--colorability of planar graphs. 
Our main theorem confirms this conjecture and, more generally, shows that the evaluation spectrum is exponential in $n$ for every fixed
$q\notin\{0,1,2\}$.

\begin{theorem}\label{thm:planar}
For every real number $q\notin\{0,1,2\}$,
the evaluation spectrum $|\mathsf{S}_n^{\pl}(q)|$ grows exponentially in $n$. 

Moreover, we can show the following uniform lower bounds: 
\[
|\mathsf{S}^{\pl}_n(q)| \;\ge\;
\begin{cases}
2^{(n-2)/2}, & q<0,\\[4pt]
2^{n/8 - o(n)}, & \hspace{-4mm} 0 < q < 2,\\[4pt]
\sqrt{F_{n-2}}= \Theta(\varphi^{(n-2)/2}), & q>2,\
\end{cases}
\]
where $(F_n)_n$ is the Fibonacci sequence and $\varphi = (1+\sqrt{5})/2$ is the golden ratio.
\end{theorem}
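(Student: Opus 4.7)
The plan is to construct, for each fixed real $q \notin \{0, 1, 2\}$, a family of planar graphs $\{G_s\}_s$ on $n$ vertices indexed by binary strings $s$, and to show that $\{P_{G_s}(q) : s\}$ has exponentially many elements. Each $G_s$ is built by attaching at each of $k = \Theta(n)$ positions along a planar ``backbone'' one of two small planar gadgets $H_0, H_1$ (each with several distinguished boundary vertices on its outer face). A chromatic transfer-matrix formalism writes $P_{G_s}(q)$ as a fixed linear functional applied to an ordered product $M_{s_1}(q) M_{s_2}(q) \cdots M_{s_k}(q) \mathbf{v}_0$, so exponential distinctness of $\{P_{G_s}(q)\}$ reduces to a non-commutativity statement about $M_0, M_1$.

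A key observation is that one cannot work with a two-vertex boundary. By direct computation every such transfer matrix $M_H$ is diagonalizable in the common basis $\{(1,1),\,(q-1,-1)\}$, with eigenvalues $A_H + (q-1) B_H$ and $A_H - B_H$, where $A_H, B_H$ are the normalized counts of proper colorings of $H$ with its two boundary vertices assigned the same or different colors. Consequently all width-$2$ transfer matrices commute, and iterated gadget products depend only on the multiset of gadgets used, giving only $\mathrm{poly}(n)$ distinct values. One therefore works with boundary width $\geq 3$, where the chromatic algebra is genuinely non-commutative (its planar representation at width $3$ is $5$-dimensional, indexed by non-crossing partitions of the three boundary points), and generic pairs of planar gadgets produce non-commuting transfer matrices. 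Exponential distinctness of $\{P_{G_s}(q)\}$ then follows from a ping-pong (Schottky-type) inequality for $M_0(q), M_1(q)$ on a suitable projective variety, together with an elementary pigeonhole argument bounding the loss incurred when reducing matrix distinctness to scalar distinctness.

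For each of the three regimes of $q$, I select specific gadgets and verify the ping-pong inequality case-by-case. When $q < 0$, the mixed-sign entries of $M_0, M_1$ (coming from factors like $(q-1)$ and $(q-2)$ of opposite sign) allow for cleanly disjoint attracting and repelling projective sets, and all $2^{(n-2)/2}$ binary strings yield distinct evaluations. When $q > 2$, the matrix entries are all positive and the ping-pong acquires a Fibonacci-like structure: adjacent gadget choices of a certain type coincide on the projective line, forcing us to restrict to strings avoiding a forbidden substring, which yields $F_{n-2}$ valid strings and, after the pigeonhole loss, the bound $\sqrt{F_{n-2}} = \Theta(\varphi^{(n-2)/2})$. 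The principal difficulty is the middle range $0 < q < 2$: here $|q-1| < 1$ and $q-2 < 0$ force the relevant eigenvalues to have comparable moduli with mixed signs, breaking the direct ping-pong. I plan to resolve this by grouping a constant number of basic gadgets into a ``composite gadget'' whose effective transfer matrix has a guaranteed spectral gap; this uses a constant factor more vertices per position and yields the weaker bound $2^{n/8 - o(n)}$. A secondary concern is maintaining planarity throughout the iterative construction, which is handled by choosing $H_0, H_1$ so that their distinguished boundary vertices lie on a single face, allowing the sequential attachment to respect a common outer face.
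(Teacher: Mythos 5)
Your overall strategy (a transfer-matrix/ping-pong argument on products of gadget matrices along a planar backbone) is the right genre, but as written the proposal has two genuine gaps, and it also dismisses prematurely the route the paper actually takes. The claim that one ``cannot work with a two-vertex boundary'' is only true for the chromatic-algebra formalism you describe, where the width-2 state space is spanned by the two partitions of the boundary pair and all such transfer matrices commute. The paper uses a different two-dimensional reduction: to a planar graph with a \emph{marked edge} $e$ it associates the vector $(P_{G/e}(q),P_{G-e}(q))$, and two local operations (subdividing $e$; adding a new vertex joined to both endpoints of $e$) act on this vector by explicit, non-commuting $2\times2$ matrices. This keeps everything planar, adds exactly one vertex per operation, and reduces the ping-pong to a one-variable computation on the ratio of the two coordinates. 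Your width-$\ge 3$ route (essentially Agol--Krushkal's chromatic-algebra approach) is not wrong in principle, but it is much heavier, and nothing in it is actually verified.

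More seriously, the two steps that carry all the content are asserted rather than proved. First, no gadgets $H_0,H_1$ are exhibited and the ping-pong inequality is never established for a single value of $q$; for the hardest range $0<q<2$ you only state a plan (``grouping a constant number of basic gadgets\dots with a guaranteed spectral gap''), whereas this is precisely where the paper needs three separate sub-cases with carefully chosen blocks ($S^{2m}$, $B^2$, $B\circ S^2\circ B$) and explicitly computed disjoint invariant intervals. Second, the passage from ``exponentially many distinct matrix products'' to ``exponentially many distinct scalars $P_{G_s}(q)$'' is waved at via ``an elementary pigeonhole argument,'' but the loss in that reduction depends on the dimension of the boundary space: the paper's square-root loss comes from the additive deletion--contraction identity for the specific two-coordinate vector (if many vectors share a second coordinate then the sums $P_G(q)$ of their coordinates are distinct; otherwise the second coordinates are already distinct evaluations of $n$-vertex planar graphs). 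With a $5$-dimensional width-3 state space you would get at best a higher-order-root loss, and your claimed constants $2^{(n-2)/2}$ and $\sqrt{F_{n-2}}$ would not follow from the sketch. As it stands the proposal is a program, not a proof.
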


In particular, Theorem \ref{thm:planar} and the trivial inequality $|\mathcal{P}_n| \geq \abs{\mathsf{S}_n(-1)} \geq \abs{\smash{\mathsf{S}_n^{\pl}}(-1)}$ together imply that $|\mathcal{P}_n| \geq 2^{(n-2)/2}$. This fact also improves quantitatively upon the lower bound from \cite{AKFlow}. Furthermore, the lower bounds from Theorem \ref{thm:planar} hold true even if we additionally require the family of (planar) graphs in $\mG_n^{\pl}$ to be connected. See Remark \ref{rmk:2-connec} for more details.

Before we say more about the proof of Theorem \ref{thm:planar}, let us first remark that for $q\in\{0,1,2\}$ the spectrum is not exponential even if we allow $G$ to vary over {\it{all}} graphs on $n$ vertices.
Indeed,
\begin{itemize}
\item $P_G(0)=0$ for every graph with at least one vertex, so $|\mathsf{S}^{\pl}_n(0)|=|\mathsf{S}_n(0)|=1$;
\item $P_G(1)=1$ if $G$ is edgeless and $P_G(1)=0$ otherwise, so $|\mathsf{S}^{\pl}_n(1)|=1$ and $|\mathsf{S}_n(1)|= 2$;
\item $P_G(2)$ counts proper $2$--colorings, so it is $0$ for non-bipartite $G$, and equals $2^{c(G)}$ for bipartite $G$ with $c(G)$ connected components.
Hence $|\mathsf{S}^{\pl}_n(2)|\leq |\mathsf{S}_n(2)|= n+1$.
\end{itemize}

The proof of Theorem \ref{thm:planar} is more delicate when $q \in (0,2)$ (and precise quantitative bounds are also more difficult to extract), so, as a prequel to that side of the story, we will first use an elementary argument to show that $|\mathsf{S}_n(q)|$ is exponential in that range, by reducing the problem to the case when $q < 0$. We record this as a second theorem, in greater generality. 

\begin{theorem}\label{thm:main}
For every non-integer real number $q > 0$, and $n\ge \lceil q\rceil+2$, 
$$|\mathsf{S}_n(q)| \;\ge\; 2^{(n-\lceil q\rceil-2)/2}.$$
\end{theorem}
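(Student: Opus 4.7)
The plan is to deduce this estimate by a complete-join reduction that transfers the problem for a non-integer $q > 0$ to an evaluation at the negative real $q - \lceil q \rceil \in (-1, 0)$, and then to invoke the (separately established) lower bound $|\mathsf{S}_N(q')| \ge 2^{(N-2)/2}$ for $q' < 0$, which is the first case of Theorem~\ref{thm:planar}. The hypothesis $n \ge \lceil q \rceil + 2$ is present precisely to leave enough room for the reduction.

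The key tool is the classical identity
\begin{equation*}
P_{K_m \vee H}(x) \;=\; x(x-1)(x-2)\cdots(x-m+1)\, P_H(x-m),
\end{equation*}
valid for every graph $H$ and every integer $m \ge 1$, where $K_m \vee H$ is the complete join of $K_m$ with $H$, that is, the graph on $m + |V(H)|$ vertices obtained from $K_m \sqcup H$ by adding all $m \cdot |V(H)|$ crossing edges. This identity follows immediately: any proper $x$-coloring of $K_m \vee H$ assigns the clique vertices an ordered $m$-tuple of distinct colors (in $x(x-1)\cdots(x-m+1)$ ways) and must then properly color $H$ using only the remaining $x - m$ colors.

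Taking $m := \lceil q \rceil$, the shifted parameter $q' := q - m$ lies in $(-1,0)$, and the prefactor $C := q(q-1)\cdots(q-m+1)$ is a nonzero real (each factor $q - j$ with $0 \le j \le m - 1$ is nonzero because $q$ is not an integer). Thus the map $H \mapsto K_m \vee H$ carries $\mG_{n-m}$ into $\mG_n$, and on evaluations it sends $P_H(q')$ to $C \cdot P_H(q')$, i.e.\ multiplies by a fixed nonzero constant, and is therefore injective on values. This yields $|\mathsf{S}_n(q)| \ge |\mathsf{S}_{n-m}(q')|$; combining with the negative-real bound applied at $N = n - m \ge 2$ and $q' \in (-1,0)$ gives $|\mathsf{S}_n(q)| \ge 2^{(n - \lceil q \rceil - 2)/2}$.

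The main obstacle is not this reduction, which is just algebraic bookkeeping, but rather the base estimate $|\mathsf{S}_N(q')| \ge 2^{(N-2)/2}$ for $q' < 0$. Once that bound is in hand (by a separate explicit construction of exponentially many $n$-vertex graphs with pairwise distinct evaluations at any fixed negative real), Theorem~\ref{thm:main} follows at once from the manipulation above.
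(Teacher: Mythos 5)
Your proposal is correct and follows essentially the same route as the paper: join with $K_{\lceil q\rceil}$ to shift the evaluation point into $(-1,0)$ via the identity $P_{G\vee K_m}(x)=(x)_m P_G(x-m)$, note the prefactor is nonzero since $q$ is non-integer, and invoke the negative-$q$ bound $|\mathsf{S}_N(q')|\ge 2^{(N-2)/2}$ established separately. The paper packages this as Lemma~\ref{lem:join-shift}, Corollary~\ref{cor:spectral-embed}, and Theorem~\ref{thm:negative}, exactly as you describe.
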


In particular, if $q \in (0,2)$, then Theorem \ref{thm:main} immediately implies that $|\mathsf{S}_n(q)| \ge 2^{(n-4)/2}$ for all $n\ge 4$. 

Last but not least, let us also emphasize that for positive integers $q \not\in \left\{0,1,2\right\}$, exponential growth for $\abs{\mathsf{S}_n(q)}$ is best possible, up to the correct base of the exponent. Indeed, if $q\in\mathbb{Z}_{\ge 1}$ then every $n$--vertex graph has at most $q^n$ proper $q$--colorings, so $|\mathsf{S}_n(q)|\le q^n+1$. 

When $q$ is \emph{not} a nonnegative integer, there is however no comparable a priori bound of the form $|P_G(q)|\le C^n$.
When $q=-1$, Stanley's theorem~\cite{Stanley1973} gives $|P_{K_n}(-1)|=n!$, and, more generally, it is not difficult to show that for any fixed $q\in\mathbb{R}\setminus\mathbb{Z}_{\ge 0}$ the complete graph satisfies
$P_{K_n}(q)=(q)_n=q(q-1)\cdots(q-n+1)$, whose magnitude grows on the order of $n!$.
These factorial-scale values suggest that, at least for some non-integer or negative points, the spectrum itself should be \emph{superexponential}. We will discuss this some more in Section \ref{sec:conclusion}.

\vspace{+3mm}
\noindent\textbf{Related work and proof ideas.}
A closely related spectrum problem was recently resolved for the spanning-tree enumerator.
Let $\tau(G)$ be the number of spanning trees of $G$, and set
\[
\mathcal{T}(n)\coloneqq \{\tau(G): G \in \mG_n \},
\]
as well as
\begin{equation*}
    \mathcal{T}^{\pl}(n) \coloneqq \{\, P_G(q) \, : \, G \in \mG_n^{\pl} \}.
\end{equation*}

In \cite{ChanKontorovichPak24trees}, Chan, Kontorovich and Pak proved that $|\mathcal{T}^{\pl}(n)|$ grows exponentially in $n$, via a connection to continued fractions, thin orbits, and a deep result of Bourgain-Kontorovich \cite{BourgainKontorovich14} on the so-called Zaremba conjecture \cite{Zaremba72}. See \cite{ChanKontorovichPak24trees} and the references therein for more information. Soon after, Alon--Buci\'c--Gishboliner~\cite{ABG} gave a much simpler alternative proof, which also came with improved quantitative bounds and results for other classes of graphs. Both arguments made crucial use of the deletion--contraction formula for the spanning-tree enumerator $\tau$ and a handful of basic operations that increase the number of vertices of the graph, and which allow one to track the evolution of the parameters $\tau(G-e)$ and $\tau(G/e)$ (for various edges $e$ of $G$), as the graph $G$ undergoes these changes. In \cite{ABG}, however, Alon, Buci\'c and Gishboliner noted that the parameters $\tau(G-e)$ and $\tau(G/e)$ of $G$ can sometimes instantly uniquely decode the entire process, an observation which drastically simplified the analysis. 

For the chromatic polynomial spectrum, Agol and Krushkal \cite{AKTutte} also previously employed a similar strategy to show that $|\mathcal{P}_n|$ grows exponentially with $n$. An important difference in their argument is that the basic operations employed are less simple than the ones in ~\cite{ABG} or \cite{ChanKontorovichPak24trees}, and so one cannot readily decode the process if their operations are used freely to construct their $n$--vertex graphs. In order to circumvent this barrier, they consider only graphs obtained through a sequence of block operations, each of which combines some of their set of basic operations. To decode the sequence of block operations from the final parameters obtained, Agol and Krushkal then appeal to a simple application of the ping--pong lemma, a basic tool from geometric group theory. See for example \cite{Loh} and the references therein or \cite{BrennerCharnow78}.

Our argument, in some sense, unifies the two strategies from \cite{AKTutte} and \cite{ABG}. To prove Theorem \ref{thm:planar}, we make use of block operations that consist of sequences of two of the basic operations used in~\cite{ABG}, and then establish various ping--pong lemma type criteria, in order to decode.

\vspace{+2mm}

\noindent\textbf{Organization of the paper.} The proof of Theorem \ref{thm:planar} is most pleasant when $q<0$, so we set the stage by discussing this case first in Section \ref{sec:negative}.

After that, in Section \ref{sec:noninteger}, we take a first dip into the more problematic case when $q \in (0,2)$, which we will first address for the $|\mathsf{S}_n(q)|$. Using a different kind of combinatorial argument, we will derive a lower bound by reducing the problem to the case when $q \in (-1,0)$, for which the result from Section \ref{thm:planar} applies. This argument is the content of Theorem \ref{thm:main} and works in greater generality for every non-integer real number $q > 0$, and $n\ge \lceil q\rceil+2$. It does not apply directly to lower bound $|\mathsf{S}^{\pl}_n(q)|$ but it provides a better quantitative bound for $|\mathsf{S}_n(q)|$ than Theorem \ref{thm:planar} does in this range. 

In Section \ref{sec:integers}, we continue the proof of Theorem \ref{thm:planar} with a more complicated ping--pong argument showing that $|\mathsf{S}^{\pl}_n(q)|$ is exponential in $n$ for each $q >2$. 

In Section \ref{sec:planar_0_2}, we then complete the proof of Theorem \ref{thm:planar} by showing that $|\mathsf{S}^{\mathrm{pl}}_n(q)|$ is exponential in $n$ for every $q\in(0,2)\setminus\{1\}$ as well, by using a more intricate combination of block constructions. 

In Section \ref{sec:conclusion}, we end with some concluding remarks and a couple of natural open problems. 

\section*{Acknowledgments}
The last two authors are also grateful to Matija Buci\'c for an excellent talk on \cite{ABG} at the IAS/PCMI summer program on extremal combinatorics, which inspired the present paper. C.P.\ was supported by NSF grant DMS-2246659.

\section{Negative evaluations and chromatic reciprocity}\label{sec:negative}

Throughout this section we fix a real number $q<0$ and write $\lambda\coloneqq-q>0$. 

\subsection{A sign-normalization and what it counts at negative integers}

Define the sign-normalized evaluation
\begin{equation}\label{eq:Zdef}
Z_G(\lambda)\coloneqq(-1)^{|V(G)|}\,P_G(-\lambda)=(-1)^{|V(G)|}\,P_G(q).
\end{equation}
For a fixed $n$, multiplying by the constant factor $(-1)^n$ is injective, so bounding the number of distinct values of $Z_G(\lambda)$ over $|V(G)|=n$
is equivalent to bounding $|\mathsf{S}_n(q)|$.

\begin{proposition}[Chromatic reciprocity {\cite[Thm.\ 1.2]{Stanley1973}}]\label{prop:chromrecip}
Let $G$ be a loopless graph on $n$ vertices and let $m\in\mathbb{Z}_{\ge 1}$.
Then $Z_G(m)=(-1)^nP_G(-m)$ equals the number of pairs $(\mathcal{O},f)$ where
$\mathcal{O}$ is an acyclic orientation of $G$ and $f:V(G)\to[m]$ is order-preserving with respect to $\mathcal{O}$ (i.e., if $u\to v$ is an oriented edge of $\mathcal{O}$, then $f(u)\le f(v)$).
In particular, $Z_G(1)=\#\{\text{acyclic orientations of }G\}$.
\end{proposition}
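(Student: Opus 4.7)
This is a classical reciprocity theorem of Stanley, and my plan is to prove it by induction on $|E(G)|$, matching the deletion--contraction recurrence for $Z_G(m)$ against an analogous recurrence for the number of pairs $a(G,m)\coloneqq \#\{(\mathcal{O},f)\}$. The algebraic side is routine: applying \eqref{eq:dc-intro} at $x=-m$ and multiplying by $(-1)^n$, the identity $(-1)^{|V(G/e)|}=-(-1)^n$ flips the minus in the recurrence into a plus, giving
\begin{equation*}
Z_G(m)\;=\;Z_{G-e}(m)+Z_{G/e}(m)
\end{equation*}
for every edge $e$. The base case $|E(G)|=0$ is immediate: $P_G(x)=x^n$ gives $Z_G(m)=m^n$, while $G$ has a unique (empty) acyclic orientation and every one of the $m^n$ maps $f:V\to[m]$ is vacuously order-preserving. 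So it suffices to prove the matching combinatorial recurrence $a(G,m)=a(G-e,m)+a(G/e,m)$.

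To set this up, fix $e=uv\in E(G)$ and sort the pairs $(\mathcal{O},f)$ for $G$ by the values of $f$ at $u,v$. The \emph{strict} pairs, with $f(u)\neq f(v)$, are in bijection with pairs on $G-e$ satisfying $f(u)\neq f(v)$ by forgetting the orientation of $e$; the inverse reinstates $e$ in the direction forced by $f$, and this cannot close a cycle because any such cycle would contain an $\mathcal{O}|_{G-e}$-directed path from the larger $f$-value back to the smaller one, contradicting order-preservation. The \emph{loose} pairs, with $f(u)=f(v)$, I would analyze by first forgetting the orientation of $e$ and classifying the resulting pair $(\mathcal{O}',f)$ on $G-e$ by whether $\mathcal{O}'$ admits a directed $u$-to-$v$ path or a $v$-to-$u$ path. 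At most one such direction can occur (the two paths would combine into a cycle in $\mathcal{O}'$), so a loose pair on $G-e$ admits $2$ acyclic lifts to $G$ if neither directed path exists and exactly $1$ if one does. Finally, contracting $e$ identifies the pairs on $G/e$ with those loose pairs on $G-e$ with \emph{no} directed $u$-$v$ path in either direction. Adding the strict and loose counts gives $a(G,m)=a(G-e,m)+a(G/e,m)$, matching the algebraic recurrence and closing the induction.

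The main technical obstacle is the acyclicity bookkeeping in the contraction step: one has to check that a directed cycle in the contracted orientation on $G/e$ lifts, via a case analysis on which of $u,v$ the two edges of the cycle incident to the merged vertex come from, either to a directed cycle already present in $\mathcal{O}'$ (forbidden) or to a directed $u$-$v$ path in $\mathcal{O}'$ (which is precisely the class excluded in the bijection above). If $u$ and $v$ have common neighbors in $G$, the ``simple'' $G/e$ identifies the resulting parallel edges, and to make the orientation bijection clean one should work with $G/e$ as a multigraph (this is harmless at the level of chromatic polynomials, and it matches the deletion--contraction convention used in \eqref{eq:dc-intro}). Once this bookkeeping is in place, the induction goes through, and the last assertion $Z_G(1)=\#\{\text{acyclic orientations of }G\}$ falls out of the special case $m=1$, where the only order-preserving map is constant.
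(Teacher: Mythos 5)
The paper does not prove this proposition at all: it is quoted verbatim as Stanley's reciprocity theorem \cite[Thm.~1.2]{Stanley1973}, so there is no in-paper argument to compare against. Your proposal supplies a correct, self-contained proof, and it is essentially the standard deletion--contraction induction (close in spirit to Stanley's original argument): the base case $m^n$ is right, the sign bookkeeping giving $Z_G(m)=Z_{G-e}(m)+Z_{G/e}(m)$ is the same computation as the paper's Lemma~\ref{lem:dcplus}, and the combinatorial recurrence $a(G,m)=a(G-e,m)+a(G/e,m)$ is established correctly: the strict pairs ($f(u)\neq f(v)$) biject with the corresponding pairs on $G-e$ because the orientation of $e$ is forced by $f$ and acyclicity follows from weak monotonicity of $f$ along directed paths, while each loose pair on $G-e$ lifts to either one or two acyclic orientations of $G$ according to whether a directed path joins $u$ and $v$, and the ``two-lift'' pairs are exactly those that descend to $G/e$. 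Your flagged technical point about contraction is handled appropriately; one small remark that makes it even cleaner is that in any acyclic orientation of a multigraph all parallel edges between a given pair of vertices must be co-oriented, so the pair count for the multigraph $G/e$ automatically agrees with that of its simplification --- the multigraph convention is therefore harmless for the combinatorial side as well, not only for the chromatic polynomial. With that observation the induction closes, and the specialization $m=1$ gives the count of acyclic orientations as you state.
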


\begin{remark}\label{rem:noninteger}
For non-integer $\lambda>0$ there is, in general, no literal set being counted by $Z_G(\lambda)$; rather $Z_G(\lambda)$ is the polynomial extension of the counting function in Proposition~\ref{prop:chromrecip}.
Our proof below uses only the structural consequences that persist for all $\lambda>0$: positivity and an additive deletion--contraction recurrence.
(For a geometric perspective on these reciprocity phenomena, see~\cite{BeckZaslavskyInsideOut}.)
\end{remark}

\subsection{Positivity and an additive deletion--contraction identity}

A standard consequence of Whitney's broken-circuit theorem (see, e.g.,~\cite{StanleyEC1}) is that the coefficients of $P_G(x)$ alternate in sign:
for a loopless graph $G$ on $n$ vertices there exist integers $a_0,\dots,a_n\ge 0$ such that
\begin{equation}\label{eq:alt-sign}
P_G(x)=\sum_{k=0}^n (-1)^k a_k x^{n-k}.
\end{equation}

\begin{lemma}[Positivity for $\lambda>0$]\label{lem:Zpositive}
If $G$ is loopless and $\lambda>0$, then $Z_G(\lambda)>0$.
\end{lemma}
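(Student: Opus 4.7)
The plan is to derive positivity directly from the alternating-sign expansion \eqref{eq:alt-sign}. Substituting $x=-\lambda$ in $P_G(x)=\sum_{k=0}^n(-1)^k a_k x^{n-k}$ gives
\[
P_G(-\lambda)=\sum_{k=0}^n (-1)^k a_k (-\lambda)^{n-k}=\sum_{k=0}^n (-1)^k(-1)^{n-k} a_k\lambda^{n-k}=(-1)^n\sum_{k=0}^n a_k\lambda^{n-k},
\]
so after multiplying by $(-1)^n$ the signs collapse and $Z_G(\lambda)=\sum_{k=0}^n a_k\lambda^{n-k}$. This is a nonnegative combination of positive quantities $\lambda^{n-k}$.

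To upgrade this to strict positivity, I would invoke the fact that $P_G(x)$ is monic of degree $n=|V(G)|$, so $a_0=1$. The leading term $a_0\lambda^n=\lambda^n$ is therefore strictly positive whenever $\lambda>0$, and since every other summand is nonnegative, $Z_G(\lambda)\ge\lambda^n>0$.

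The only thing worth being careful about is the hypothesis that $G$ is loopless, which is precisely what Whitney's broken-circuit theorem requires in order to guarantee the sign-alternation \eqref{eq:alt-sign} with $a_k\ge 0$ (a loop would force $P_G\equiv 0$ and spoil monicity as stated). This hypothesis is already assumed. There is no real obstacle here; the lemma is essentially a one-line consequence of \eqref{eq:alt-sign} combined with the definition \eqref{eq:Zdef}, and the argument does not use any of the integer-$\lambda$ combinatorial interpretation from Proposition~\ref{prop:chromrecip}, which is exactly the point emphasized in Remark~\ref{rem:noninteger}.
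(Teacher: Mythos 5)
Your argument is correct and is exactly the paper's proof: substitute $x=-\lambda$ into the alternating-sign expansion \eqref{eq:alt-sign} to get $Z_G(\lambda)=\sum_{k=0}^n a_k\lambda^{n-k}$, then use $a_0=1$ for strict positivity. The extra remarks about looplessness and monicity are accurate but not needed beyond what the paper already assumes.
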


\begin{proof}
Substituting $x=-\lambda$ into~\eqref{eq:alt-sign} gives
\[
(-1)^n P_G(-\lambda)=\sum_{k=0}^n a_k \lambda^{n-k},
\]
which is strictly positive since $a_0=1$.
\end{proof}

\begin{lemma}[Additive deletion--contraction for $Z$]\label{lem:dcplus}
Let $e$ be a non-loop edge of $G$. Then
\begin{equation}\label{eq:dcplus}
Z_G(\lambda)=Z_{G-e}(\lambda)+Z_{G/e}(\lambda).
\end{equation}
\end{lemma}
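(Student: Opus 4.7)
The plan is to derive \eqref{eq:dcplus} directly from the ordinary multiplicative deletion--contraction identity \eqref{eq:dc-intro} by absorbing the sign $(-1)^{|V|}$ from the definition \eqref{eq:Zdef} into the subtracted contraction term. The key bookkeeping observation is that $G-e$ has the same vertex set as $G$, while $G/e$ has exactly one fewer vertex (since $e$ is a non-loop edge, its two endpoints get identified into one). So if we set $n=|V(G)|$, then $|V(G-e)|=n$ and $|V(G/e)|=n-1$.

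First, I would start from Whitney's identity \eqref{eq:dc-intro} evaluated at $x=-\lambda$, namely
\begin{equation*}
P_G(-\lambda)=P_{G-e}(-\lambda)-P_{G/e}(-\lambda).
\end{equation*}
Next, multiplying through by $(-1)^n$, the left-hand side is by definition $Z_G(\lambda)$, the first term on the right is $(-1)^n P_{G-e}(-\lambda)=Z_{G-e}(\lambda)$, and the second term is
\begin{equation*}
-(-1)^n P_{G/e}(-\lambda)=(-1)^{n-1}P_{G/e}(-\lambda)=Z_{G/e}(\lambda),
\end{equation*}
where I used $|V(G/e)|=n-1$ in the final equality. Combining these three substitutions yields \eqref{eq:dcplus}.

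There is essentially no obstacle: the only thing to be careful about is making sure the vertex count of $G/e$ is correctly tracked, which is where the minus sign flips to a plus. The hypothesis that $e$ is a non-loop edge is used exactly to guarantee $|V(G/e)|=|V(G)|-1$ (and to ensure that the standard form of the deletion--contraction identity applies in the first place). The argument is a single line after this observation, and it makes transparent why $Z_G(\lambda)$ behaves additively under deletion--contraction — a feature that, together with Lemma~\ref{lem:Zpositive}, will power the exponential lower bound for $|\mathsf{S}_n(q)|$ in the regime $q<0$.
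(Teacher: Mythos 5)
Your proof is correct and is essentially identical to the paper's: both evaluate the deletion--contraction identity at $x=-\lambda$, multiply by $(-1)^{|V(G)|}$, and use the fact that $|V(G/e)|=|V(G)|-1$ to flip the sign of the contraction term. Nothing further is needed.
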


\begin{proof}
From deletion--contraction $P_G(x)=P_{G-e}(x)-P_{G/e}(x)$ and the definition~\eqref{eq:Zdef} we compute
\[
Z_G(\lambda)
=
(-1)^{|V(G)|}P_G(-\lambda)
=
(-1)^{|V(G)|}P_{G-e}(-\lambda)-(-1)^{|V(G)|}P_{G/e}(-\lambda).
\]
The first term is $Z_{G-e}(\lambda)$, which has the same number of vertices, while
$$(-1)^{|V(G)|}P_{G/e}(-\lambda)=-(-1)^{|V(G/e)|}P_{G/e}(-\lambda)=-Z_{G/e}(\lambda),$$
giving~\eqref{eq:dcplus}.
\end{proof}

\subsection{Feasible vectors and a square-root lemma}

\begin{definition}[Feasible vectors]\label{def:neg-feasible}
Let $e$ be an edge of a planar graph $G$.  Define the feasible vector witnessed by $(G,e)$ to be
\[
v_\lambda(G,e)\coloneqq
\begin{pmatrix}
Z_{G/e}(\lambda)\\
Z_{G-e}(\lambda)
\end{pmatrix}.
\]
We write $v(G,e)= v_\lambda(G,e)$ when $\lambda$ is clear from context. We call $v$ an \emph{$n$--feasible} vector if there exists an $n$--vertex planar graph $G$ and an edge $e\in E(G)$ such that $v(G,e)=v$. 
\end{definition}

We record a simple yet crucial lemma connecting the number of such vectors with size of $\mathsf{S}^{\pl}_n (q)$. 

\begin{lemma}[Square-root lemma]\label{lem:sqrt-neg}
If there are at least $N$ distinct $n$--feasible vectors, then
\[
|\mathsf{S}^{\pl}_n (q)| \;\ge\; \sqrt{N}.
\]
\end{lemma}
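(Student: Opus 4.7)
The plan is to bound the number of $n$--feasible vectors from above by $|\mathsf{S}^{\pl}_n(q)|^{2}$ via two elementary observations. First I would show that each coordinate of a feasible vector lies in a structured finite set, giving a product bound; then I would use a simple monotonicity argument to collapse the product into a square.

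For the first step, fix $G\in\mG_n^{\pl}$ and an edge $e\in E(G)$. The second coordinate $Z_{G-e}(\lambda)=(-1)^n P_{G-e}(q)$ takes one of at most $|\mathsf{S}^{\pl}_n(q)|$ distinct values, since $G-e\in\mG_n^{\pl}$ and multiplication by the fixed sign $(-1)^n$ is a bijection on the real line. The first coordinate $Z_{G/e}(\lambda)=(-1)^{n-1}P_{G/e}(q)$ arises from the $(n-1)$--vertex contracted graph; although contraction may create parallel edges, these do not affect the chromatic polynomial, so one may replace $G/e$ by its underlying simple graph in $\mG_{n-1}^{\pl}$ without altering $Z_{G/e}(\lambda)$. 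Consequently $Z_{G/e}(\lambda)$ takes at most $|\mathsf{S}^{\pl}_{n-1}(q)|$ values, yielding the product bound
\[
N \;\le\; |\mathsf{S}^{\pl}_{n-1}(q)|\cdot|\mathsf{S}^{\pl}_n(q)|.
\]

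For the second step, I would establish $|\mathsf{S}^{\pl}_{n-1}(q)|\le |\mathsf{S}^{\pl}_n(q)|$ by adjoining an isolated vertex. Given $H\in\mG_{n-1}^{\pl}$, set $H'\coloneqq H\sqcup\{v\}\in\mG_n^{\pl}$; then $P_{H'}(q)=q\cdot P_H(q)$, and since $q<0\ne 0$ in this section, the map $P_H(q)\mapsto P_{H'}(q)$ is injective, giving an injection $\mathsf{S}^{\pl}_{n-1}(q)\hookrightarrow \mathsf{S}^{\pl}_n(q)$.

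Combining the two steps gives $N\le |\mathsf{S}^{\pl}_n(q)|^{2}$, hence $|\mathsf{S}^{\pl}_n(q)|\ge \sqrt{N}$. There is no real obstacle; the only mild care needed is the reminder that contraction preserves the chromatic polynomial up to simplification of parallel edges, so that first coordinates of feasible vectors genuinely lie in $\{Z_H(\lambda):H\in\mG_{n-1}^{\pl}\}$.
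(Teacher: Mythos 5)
Your proof is correct, but it runs along a genuinely different track from the paper's. The paper argues by pigeonhole on the multiset of second coordinates: either at least $\sqrt N$ of the values $u_i=Z_{G_i-e_i}(\lambda)$ are already distinct (and $G_i-e_i$ is an $n$--vertex planar witness), or some $u$ repeats $\sqrt N$ times, in which case the corresponding $t_i$ are distinct and the additive deletion--contraction identity $Z_{G_i}(\lambda)=t_i+u$ produces $\sqrt N$ distinct evaluations at the $n$--vertex graphs $G_i$ themselves. You instead bound $N$ from above by a product: the second coordinate lies in $(-1)^n\mathsf{S}^{\pl}_n(q)$, the first coordinate lies in $(-1)^{n-1}\mathsf{S}^{\pl}_{n-1}(q)$ (after simplifying the parallel edges created by contraction, which you rightly note is harmless), so $N\le|\mathsf{S}^{\pl}_{n-1}(q)|\cdot|\mathsf{S}^{\pl}_n(q)|$, and then you collapse the product using the isolated-vertex injection $P_{H\sqcup\{v\}}(q)=q\,P_H(q)$. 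Both arguments are elementary and both are valid here; the trade-off is that your route avoids the deletion--contraction identity entirely in this lemma but pays for it with the extra monotonicity step, which requires $q\neq 0$ (harmless under the standing hypothesis $q<0$, and also in the later application in Lemma~\ref{lem:sqrt-pos} where $q>2$), whereas the paper's case analysis needs no such restriction and extracts the distinct values directly from the witnesses $G_i$ and $G_i-e_i$ without passing through $(n-1)$--vertex graphs.
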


Lemma \ref{lem:sqrt-neg} is completely analogous to \cite[Lemma 2.1]{ABG}. 

\begin{proof}
Let $(t_i,u_i)$ be $N$ distinct $n$--feasible vectors with witnesses $(G_i,e_i)$.
Consider the multiset of second coordinates $\{u_i\}$.

If at least $\sqrt{N}$ of the $u_i$ are distinct, then, since edge-deletion preserves planarity, the values $Z_{G_i-e_i}(\lambda)=u_i$ give at least $\sqrt{N}$ distinct evaluations coming from $n$--vertex planar graphs.

Otherwise, some value $u$ occurs for at least $\sqrt{N}$ indices.  For those indices the pairs $(t_i,u)$ are distinct, hence the sums $t_i+u$ are distinct.
By Lemma~\ref{lem:dcplus}, $t_i+u = Z_{G_i}(\lambda)$, so again we obtain at least $\sqrt{N}$ distinct values.
Finally $P_G(q)=(-1)^n Z_G(\lambda)$ for $|V(G)|=n$, so the same lower bound holds for $|\mathsf{S}_n^{\pl} (q)|$.
\end{proof}

\subsection{Two basic operations and their matrices}

We use two operations on witnessed edges.  Let $e=ab$ be an edge of $G$.

\begin{definition}[Basic operations]\label{def:ops-neg}
\begin{enumerate}
\item[(S)] (\emph{Subdivision}) Subdivide $e$ by a new vertex $w$, replacing $e$ with the path $a\!-\!w\!-\!b$.
Let $G'$ be the new graph and set $S(G,e)\coloneqq(G',aw)$.
\item[(B)] (\emph{Add a vertex adjacent to both endpoints}) Add a new vertex $w$ adjacent to both $a$ and $b$.
Let $G^\ast$ be the new graph and set $B(G,e)\coloneqq(G^\ast,e)$.
\end{enumerate}
\end{definition}

We repeatedly use the standard leaf identity: if $H$ is obtained from $G$ by attaching a new leaf to an existing vertex, then
\begin{equation}\label{eq:leaf}
P_H(x)=(x-1)P_G(x)\qquad\text{and hence}\qquad Z_H(\lambda)=(\lambda+1)Z_G(\lambda).
\end{equation}

\begin{lemma}[Matrix action]\label{lem:matrices-neg}
Let $e$ be an edge of a planar graph $G$, and write $v(G,e)=\binom{t}{u}$.
Then
\[
(v\circ S)(G,e)
=
\begin{pmatrix}
1 & 1\\
0 & \lambda+1
\end{pmatrix}
\binom{t}{u},
\qquad
(v\circ B)(G,e)
=
\begin{pmatrix}
\lambda+1 & 0\\
1 & \lambda+2
\end{pmatrix}
\binom{t}{u}.
\]
\end{lemma}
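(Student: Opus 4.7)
The plan is to compute both coordinates of $v(S(G,e))$ and $v(B(G,e))$ directly by identifying the deletion/contraction graphs with simple modifications of $G$, and then applying the leaf identity \eqref{eq:leaf} together with the additive deletion--contraction identity in Lemma~\ref{lem:dcplus}.

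First I would handle the subdivision operation $S$. Write $G'=S(G,e)$, with $e$ replaced by the path $a\text{--}w\text{--}b$ and with witnessed edge $aw$. Contracting $aw$ in $G'$ identifies $w$ with $a$; the other edge $wb$ thereby becomes $ab$, and so $G'/aw$ is isomorphic to $G$. Consequently, using Lemma~\ref{lem:dcplus} applied to the edge $e$ in $G$, we get $Z_{G'/aw}(\lambda)=Z_G(\lambda)=Z_{G/e}(\lambda)+Z_{G-e}(\lambda)=t+u$. Deleting $aw$ in $G'$ leaves $w$ as a leaf attached to $b$ in the graph $G-e$, so the leaf identity \eqref{eq:leaf} yields $Z_{G'-aw}(\lambda)=(\lambda+1)\,Z_{G-e}(\lambda)=(\lambda+1)u$. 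This is exactly the claimed matrix action for $S$.

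Next I would treat operation $B$. Write $G^{\ast}=B(G,e)$, where $w$ is a new vertex joined to both $a$ and $b$, and the witnessed edge remains $e=ab$. Contracting $e$ in $G^{\ast}$ merges $a$ and $b$ into one vertex, say $[ab]$; the two edges $aw,bw$ become parallel and collapse (since we work with simple graphs for the chromatic polynomial, or equivalently note that $P$ is unchanged by removing parallel edges) to a single edge from $[ab]$ to $w$. The resulting graph is $G/e$ with $w$ attached as a leaf, hence $Z_{G^{\ast}/e}(\lambda)=(\lambda+1)\,Z_{G/e}(\lambda)=(\lambda+1)t$ by \eqref{eq:leaf}. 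For the second coordinate, I would apply Lemma~\ref{lem:dcplus} to the edge $aw$ in the graph $G^{\ast}-e$: deleting $aw$ leaves $w$ as a leaf attached to $b$ in $G-e$, contributing $(\lambda+1)u$, while contracting $aw$ identifies $w$ with $a$ and turns $bw$ into $ba$, recovering a copy of $G$ and contributing $Z_G(\lambda)=t+u$. Adding these gives $Z_{G^{\ast}-e}(\lambda)=t+(\lambda+2)u$, which matches the second row of the matrix for $B$.

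The step I expect to require the most care is the computation for $B$, because it involves two subtle points at once: the appearance of parallel edges after contracting $e$ in $G^{\ast}$ (which must be handled consistently with the chromatic polynomial conventions), and the fact that $B$ does not itself simplify $G^{\ast}-e$ into a leaf-extension, forcing a secondary application of deletion--contraction on the auxiliary edge $aw$. Everything else is a routine identification of graph isomorphisms combined with \eqref{eq:leaf} and Lemma~\ref{lem:dcplus}.
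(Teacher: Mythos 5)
Your proof is correct and takes essentially the same approach as the paper's: the treatment of $S$ and of the contraction coordinate for $B$ is identical, and for the deletion coordinate of $B$ you apply deletion--contraction to the auxiliary edge $aw$ in $G^{\ast}-e$ exactly as the paper does. The only (cosmetic) difference is that you work with the additive $Z$-identity of Lemma~\ref{lem:dcplus} throughout, whereas the paper first derives the polynomial identity $P_{G^{\ast}-e}(x)=P_{G/e}(x)+(x-2)P_{G-e}(x)$ and converts to $Z$ at the end.
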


\begin{proof}
Let $e=ab$.

\smallskip\noindent\emph{Operation $S$.}
Let $(G',f)=S(G,e)$ with $f=aw$.
Contracting $f$ merges $a$ and $w$ and collapses the path $a\!-\!w\!-\!b$ back to the single edge $ab$, so $(G'/f)\cong G$ and hence
\[
Z_{G'/f}(\lambda)=Z_G(\lambda)=Z_{G/e}(\lambda)+Z_{G-e}(\lambda)=t+u
\]
by Lemma~\ref{lem:dcplus}.
Deleting $f$ makes $w$ a leaf attached to $b$ in the graph $G-e$, so by~\eqref{eq:leaf}
\[
Z_{G'-f}(\lambda)=(\lambda+1)Z_{G-e}(\lambda)=(\lambda+1)u.
\]
This gives the stated matrix for $S$.

\smallskip\noindent\emph{Operation $B$.}
Let $(G^\ast,e)=B(G,e)$ with new vertex $w$ adjacent to both $a$ and $b$.
Contracting $e$ in $G^\ast$ merges $a$ and $b$; the two edges $aw$ and $bw$ become parallel edges from the merged vertex to $w$, which do not affect proper colorings, so $G^\ast/e$ is obtained from $G/e$ by attaching a leaf. Hence by~\eqref{eq:leaf},
\[
Z_{G^\ast/e}(\lambda)=(\lambda+1)Z_{G/e}(\lambda)=(\lambda+1)t.
\]

For deletion, set $H\coloneqq G^\ast-e$.  We claim the polynomial identity
\begin{equation}\label{eq:polyid}
P_H(x)=P_{G/e}(x)+(x-2)P_{G-e}(x).
\end{equation}
Indeed, apply deletion--contraction to the edge $aw$ in $H$.
Deleting $aw$ leaves a leaf at $b$ attached to $G-e$, hence $P_{H-aw}(x)=(x-1)P_{G-e}(x)$.
Contracting $aw$ identifies $a$ with $w$ and turns $bw$ into an edge $ab$, recovering $G$ (up to harmless parallel edges),
so $P_{H/aw}(x)=P_G(x)=P_{G-e}(x)-P_{G/e}(x)$.
Thus, $P_H(x)$ equals
\[
P_{H-aw}(x)-P_{H/aw}(x)
=
(x-1)P_{G-e}(x)-\bigl(P_{G-e}(x)-P_{G/e}(x)\bigr)
=
P_{G/e}(x)+(x-2)P_{G-e}(x),
\]
proving~\eqref{eq:polyid}.
Evaluating~\eqref{eq:polyid} at $x=-\lambda$ and multiplying by $(-1)^{|V(H)|}=(-1)^{|V(G)|+1}$ yields
\[
Z_H(\lambda)=Z_{G/e}(\lambda)+(\lambda+2)Z_{G-e}(\lambda)=t+(\lambda+2)u.
\]
This gives the stated matrix for $B$.
\end{proof}

In view of this lemma, we will sometimes write, say, $S$ for the matrix corresponding to the operation $S$. 

\subsection{Ping--pong on ratios and exponential growth}

For $v=\binom{x}{y}$ with $y \neq 0$, define $r(v)\coloneqq x/y$. We also define the feasible vector witnessed after a sequence of operations as follows.
\begin{definition}\label{def:feasible_generated}
 Let $a = (a_1,\dots,a_t)$ be a word in the semigroup $\{B,S\}^\ast$, $G$ be a planar graph, and $e$ be an edge of $G$. Then define 
     \[\mathbf{v}(a,G,e) \coloneqq (v\circ a_t\circ a_{t-1}\circ\dots\circ a_1)(G,e),\]
     the feasible vector witnessed by the pair generated through the sequence of operations encoded by $a$ starting with the pair $(G,e)$.
\end{definition}
We can now prove the following lemma, which guarantees that different words of the same length generate different feasible vectors if starting from an appropriate pair $(G,e)$.
\begin{lemma}\label{lem:pingpong-neg}
Let $v_0\coloneqq v(K_2,e)$ where $e$ is the unique edge of $K_2$.
Then $v_0=\binom{\lambda}{\lambda^2}$ and $r(v_0)=1/\lambda$.
Moreover, on ratios the operations act by
\[
f_S(r)=\frac{r+1}{\lambda+1},
\qquad
f_B(r)=\frac{(\lambda+1)r}{r+\lambda+2}.
\]
Let $I\coloneqq (0,1/\lambda]$. Then $f_S(I)\subset\bigl(1/(\lambda+1),\,1/\lambda\bigr]$ and $f_B(I)\subset\bigl(0,\,1/(\lambda+1)\bigr]$,
so the images are disjoint and contained in $I$.
Consequently, if $w\neq w'$ are words in the semigroup $\{S,B\}^\ast$ with the same number of letters, then $r(\mathbf{v}(w,K_2,e))\neq r(\mathbf{v}(w',K_2,e))$.
\end{lemma}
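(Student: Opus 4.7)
The plan has three steps. First, I would verify the initial values and the two M\"obius formulas. Since $K_2-e$ consists of two isolated vertices, $P_{K_2-e}(x)=x^2$ and $Z_{K_2-e}(\lambda) = \lambda^2$; since $K_2/e$ is a single vertex, $P_{K_2/e}(x)=x$ and $Z_{K_2/e}(\lambda)=\lambda$. Hence $v_0 = \binom{\lambda}{\lambda^2}$ and $r(v_0) = 1/\lambda$. Applying the two matrices of Lemma~\ref{lem:matrices-neg} to a generic column $\binom{t}{u}$ and dividing the top entry by the bottom entry immediately produces the claimed formulas for $f_S(r)$ and $f_B(r)$.

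Next, I would check the image containments. Both $f_S$ and $f_B$ are M\"obius maps whose numerator and denominator have only nonnegative coefficients, and a quick derivative (or direct) check confirms that each is strictly increasing on $[0,\infty)$. The images of $I = (0, 1/\lambda]$ can therefore be read off the endpoints: the identity $f_S(1/\lambda) = 1/\lambda$ together with $\lim_{r \to 0^+} f_S(r) = 1/(\lambda+1)$ yields $f_S(I) \subset (1/(\lambda+1), 1/\lambda]$, while $\lim_{r \to 0^+} f_B(r) = 0$ together with $f_B(1/\lambda) = (\lambda+1)/(\lambda+1)^2 = 1/(\lambda+1)$ yields $f_B(I) \subset (0, 1/(\lambda+1)]$. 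In particular, the two images are disjoint and both contained in $I$.

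Finally, I would conclude by induction on word length. Since $r(v_0) \in I$ and both operations preserve $I$ at the level of ratios, every word in $\{S,B\}^\ast$ produces a ratio inside $I$. Given two distinct words $w \neq w'$ of equal length, I would compare their last letters: if they differ, the corresponding ratios lie in the disjoint sets $f_S(I)$ and $f_B(I)$, so they must be distinct; if they agree, strict monotonicity (hence injectivity) of the common last map on $I$ transports any distinctness of the prefixes to distinctness of the full ratios, so the inductive hypothesis applied to the shorter prefixes closes the case. The only mildly delicate point is the choice of $I$, tailored so that $1/(\lambda+1)$ acts as a clean separator between the two images; aside from that, the remainder is routine positivity and monotonicity on M\"obius maps.
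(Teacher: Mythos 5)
Your proposal is correct and follows essentially the same route as the paper: the same base computation for $v_0$, the same monotonicity-plus-endpoint evaluation to get the disjoint images $f_S(I)\subset(1/(\lambda+1),1/\lambda]$ and $f_B(I)\subset(0,1/(\lambda+1)]$, and the same decoding of words by their last letter. The only cosmetic difference is that you run the final step as a forward induction using injectivity of the ratio maps on $I$, whereas the paper cancels the last letter via invertibility of the matrices and iterates; these are equivalent.
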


\begin{proof}
The base computation is immediate: $K_2/e$ is a single vertex graph, so $Z_{K_2/e}(\lambda)=\lambda$; $K_2-e$ has two isolated vertices, so $Z_{K_2-e}(\lambda)=\lambda^2$.

From Lemma~\ref{lem:matrices-neg}, if $v=\binom{x}{y}$ then
\[
Sv=\binom{x+y}{(\lambda+1)y},\qquad Bv=\binom{(\lambda+1)x}{x+(\lambda+2)y},
\]
so the ratio maps are $f_S,f_B$.

Now fix $r\in I$, so $0<r\le 1/\lambda$. Since $f_S$ is strictly increasing,
\[
f_S(r)>f(0) =\frac{1}{\lambda+1}
\quad\text{and}\quad
f_S(r)\le f_S(1/\lambda) =\frac{1}{\lambda}.
\]
Thus $f_S(I)\subset(1/(\lambda+1),\,1/\lambda]$.
Similarly, since $f_B$ is strictly increasing,
\[
 f_B(r) > f_B(0) = 0\quad\text{and}\quad f_B(r)\le f_B(1/\lambda)=\frac{1}{\lambda+1}.
\]
Thus $f_B(I)\subset(0,\,1/(\lambda+1)]$.
Hence, the images are disjoint.

Finally, suppose $r(\mathbf{v}(w,K_2,e))= r(\mathbf{v}(w',K_2,e))$.
Observe that they must both lie in $I$.
By the image 
disjointness just proved and the fact that they have the same number of letters, they are either both empty or the last letter of $w$ must equal the last letter of $w'$. In the former case, $w=w'$, while in the latter case, since both matrices in Lemma~\ref{lem:matrices-neg} are invertible, we can cancel the last letter in each word and iterate to conclude that $w=w'$.
\end{proof}

\begin{remark}
    \label{rmk:2-connec}
    Note that the operations defined in Definition \ref{def:ops-neg} preserve graph planarity and connectivity, and increase the number of vertices of the graph by one. In fact, if we apply a sequence of these operations starting with $(K_3, e)$ or $(K_4,e)$, $e$ being an arbitrary edge of the respective graph, then it is not hard to inductively show that the resulting graph is $2$--connected. Indeed, this follows from Whitney's theorem \cite{WhitneyEar}, since the graph will always have an ear decomposition: after applying $S$, replace $e$ by $a-w-b$ within the ears containing $e$; after applying $B$, we simply gain a new ear.    

    In particular, replacing the starting graph in Lemma~\ref{lem:pingpong-neg} by $(K_3, e)$, Lemma~\ref{lem:pingpong-neg}, and the subsequent Lemma ~\ref{lem:ping_pong_02} and Lemma~\ref{lem:pingpong-pos}, all yield $2$--connected, planar witnesses, so after applying Lemma \ref{lem:sqrt-neg} and Lemma \ref{lem:sqrt-pos} respectively, we get the same bound asymptotically on the evaluation spectrum even if we require $G\in \mG_n^{\pl}$ to be connected.  
\end{remark}

We can now prove the first bound of Theorem~\ref{thm:planar}.

\begin{theorem}\label{thm:negative}
Fix a real number $q<0$. Then for every $n\ge 2$,
\[
|\mathsf{S}^{\pl}_n(q)| \;\ge\; 2^{(n-2)/2}.
\]
\end{theorem}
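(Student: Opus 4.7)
\medskip

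The plan is to combine the three preceding ingredients mechanically: the two basic operations from Definition \ref{def:ops-neg} each add exactly one vertex and preserve planarity, so starting from the one-edge witness $(K_2, e)$ and applying any word $w \in \{S,B\}^*$ of length $n-2$ produces a planar graph on $n$ vertices with a distinguished edge. The ping--pong result of Lemma \ref{lem:pingpong-neg} guarantees that the $2^{n-2}$ words of length $n-2$ yield $2^{n-2}$ pairwise distinct ratios $r\bigl(\mathbf{v}(w,K_2,e)\bigr)$, hence $2^{n-2}$ pairwise distinct $n$--feasible vectors. Finally, the square-root lemma (Lemma \ref{lem:sqrt-neg}) converts this into $\sqrt{2^{n-2}} = 2^{(n-2)/2}$ distinct evaluations in $\mathsf{S}^{\pl}_n(q)$.

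More explicitly, I would proceed as follows. First I would note that both operations $S$ and $B$ preserve planarity: subdivision of an edge is planarity-preserving by inspection, while adding a new vertex $w$ adjacent to the two endpoints of an edge $e=ab$ can be realized by placing $w$ in either of the two faces of the planar embedding incident with $e$. Both operations increase the number of vertices by exactly one, so after $n-2$ operations applied to $(K_2,e)$ (which has $2$ vertices) we obtain a planar graph on $n$ vertices equipped with a distinguished edge. Second, I would invoke Lemma \ref{lem:pingpong-neg} to conclude that for any two distinct words $w \neq w'$ in $\{S,B\}^{n-2}$, the feasible vectors $\mathbf{v}(w,K_2,e)$ and $\mathbf{v}(w',K_2,e)$ have distinct ratios, and are therefore themselves distinct. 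This produces at least $2^{n-2}$ distinct $n$--feasible vectors (in the sense of Definition \ref{def:neg-feasible}). Third, I would apply Lemma \ref{lem:sqrt-neg} with $N = 2^{n-2}$ to obtain the desired lower bound.

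There is essentially no obstacle here, since Lemmas \ref{lem:pingpong-neg} and \ref{lem:sqrt-neg} have done all the work; the only points requiring a line of justification are that both operations preserve planarity and increase the vertex count by one, so that the witnesses produced live in $\mG_n^{\pl}$. The argument is clean precisely because on the negative axis the sign-normalized evaluation $Z_G(\lambda)$ is strictly positive (Lemma \ref{lem:Zpositive}), which is what makes the ping--pong on ratios in a bounded positive interval work without any degeneracies.
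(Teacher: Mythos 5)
Your proposal is correct and follows exactly the same route as the paper's proof: starting from $(K_2,e)$, applying all $2^{n-2}$ words in $\{S,B\}^{n-2}$, invoking Lemma \ref{lem:pingpong-neg} for distinctness of the resulting $n$--feasible vectors, and finishing with Lemma \ref{lem:sqrt-neg}. The additional justification you give for planarity preservation is a welcome but minor elaboration.
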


\begin{proof}
Starting from $(K_2,e)$ on $2$ vertices, each application of $S$ or $B$ adds exactly one vertex and preserves planarity.
Thus every word $w\in\{S,B\}^{n-2}$ yields an $n$--vertex witness and an $n$--feasible vector
$\mathbf{v}(w, K_2,e)$. 
By Lemma~\ref{lem:pingpong-neg}, these $2^{n-2}$ vectors are all distinct.
Applying Lemma~\ref{lem:sqrt-neg} finishes the proof:
\[
|\mathsf{S}^{\pl}_n(q)|\ge \sqrt{2^{n-2}}=2^{(n-2)/2}.\qedhere
\]
\end{proof}

\begin{remark}
    It is not a coincidence that Theorem \ref{thm:negative} recovers the same quantitative bound from \cite[Corollary 2.6]{ABG}. Our two basic operations $S$ and $B$ are two out of the three operations used in \cite{ABG}, and our ping--pong Lemma \ref {lem:pingpong-neg} morally substitutes their \cite[Theorem 2.5]{ABG} in the analysis. 
    
    It is perhaps important to add that the third operation used in \cite{ABG} to subsequently upgrade their lower bound does not behave well for our purposes. Namely, in the notation of Definition \ref{def:neg-feasible} and \ref{def:ops-neg}, if we consider the witness $(G^\ast, wa)$, then one can check that 
\begin{equation*}
    v(G^\ast, wa) = \begin{pmatrix}
        1 & 1 \\ 
        \lambda +1 & \lambda + 1
    \end{pmatrix} v(G,e).
\end{equation*}
The matrix is singular, so it is not possible to use this operation and still get a free semigroup.
\end{remark}
\section{Non-integer positive evaluations via clique joins}\label{sec:noninteger}

In this section we show that once the evaluation spectrum $\mathsf{S}_n(q)$ grows exponentially for every point in $(-1,0)$, it is automatically exponentially large for every non-integer positive point. In particular, this handles the problematic range $q \in (0,2)$.

The reduction uses the join operation and a simple coloring-counting identity. 

\subsection{Join with a clique shifts the evaluation point}

\begin{definition}[Join]
For graphs $G$ and $H$ on disjoint vertex sets, the \emph{join} $G\vee H$
is obtained from $G\sqcup H$ by adding all edges between $V(G)$ and $V(H)$.
\end{definition}

\begin{lemma}\label{lem:join-shift}
Let $G$ be a graph and let $m\ge 1$. Then
\begin{equation}\label{eq:join-shift}
P_{G\vee K_m}(x)=(x)_m\,P_G(x-m),
\end{equation}
where $(x)_m\coloneqq x(x-1)\cdots(x-m+1)$ is the falling factorial.
\end{lemma}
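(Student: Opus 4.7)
The plan is to verify the identity first at sufficiently large positive integers, where both sides literally count proper colorings, and then invoke the fact that a polynomial is determined by its values on an infinite set.

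First, I would fix an integer $q\ge m$ and count proper $q$--colorings $\varphi\colon V(G\vee K_m)\to[q]$ directly. Since the $m$ vertices of the $K_m$--side form a clique, they must receive pairwise distinct colors; the number of ordered injections $V(K_m)\hookrightarrow[q]$ is exactly $(q)_m=q(q-1)\cdots(q-m+1)$. Next, every vertex of $G$ is adjacent to every vertex of $K_m$ in the join, so $\varphi|_{V(G)}$ must avoid the $m$ colors already used on $V(K_m)$. What remains is the choice of a proper coloring of $G$ using the $q-m$ leftover colors, and because $P_G$ depends only on the cardinality of the palette (not on which specific colors are available), the number of such extensions is $P_G(q-m)$. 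Multiplying the two independent counts gives
\[
P_{G\vee K_m}(q)=(q)_m\,P_G(q-m)\qquad\text{for every integer }q\ge m.
\]

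Second, I would promote this numerical identity to the stated polynomial identity. The left side of \eqref{eq:join-shift} is the chromatic polynomial of $G\vee K_m$, a polynomial in $x$ of degree $|V(G)|+m$. The right side $(x)_m\,P_G(x-m)$ is also a polynomial in $x$ of the same degree. Since these two polynomials agree at every integer $q\ge m$, an infinite set, they agree as elements of $\mathbb{Z}[x]$. This automatically handles the cases $0\le q<m$ (where the counting argument degenerates and $(q)_m=0$ forces $P_{G\vee K_m}(q)=0$, which is correct since $G\vee K_m$ contains $K_{m+1}\setminus \{\text{something}\}$... or more simply, $G\vee K_m$ contains a clique of size $m$, and a larger clique if $G$ is nonempty) as well as the substitution at arbitrary real or complex $x$.

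There is no serious obstacle here: the only small care needed is to justify that the coloring of $V(G)$, once the $m$ colors on $V(K_m)$ are fixed, is enumerated by $P_G(q-m)$ rather than by something that depends on the chosen color subset. This is the standard fact that $P_G$ is a function of the number of available colors only, which follows from the definition (equivalently, from the symmetry of $P_G$ under permuting the color set). With that in hand, the argument is complete.
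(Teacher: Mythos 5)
Your proof is correct and is essentially identical to the paper's: both count proper $q$--colorings for integers $q\ge m$ by first coloring the clique in $(q)_m$ ways and then coloring $G$ with the remaining $q-m$ colors, and then pass to a polynomial identity since the two sides agree on infinitely many integers.
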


\begin{proof}
Fix an integer $x\ge m$.
In a proper $x$-coloring of $G\vee K_m$, the $m$ vertices of the clique $K_m$ must receive $m$ distinct colors,
and none of those colors may appear on $G$ (since every vertex of $G$ is adjacent to every vertex of $K_m$).
There are $(x)_m$ ways to color $K_m$, and then $P_G(x-m)$ ways to color $G$ using the remaining $x-m$ colors.
Thus \eqref{eq:join-shift} holds for all integers $x\ge m$, and hence as an identity of polynomials.
\end{proof}

\begin{corollary}\label{cor:spectral-embed}Fix a real $q$ and an integer $m\ge 1$ such that $(q)_m\neq 0$.
Then for every $n\ge 1$,
\[
|\mathsf{S}_{n+m}(q)| \ \ge\ |\mathsf{S}_n (q-m)|.
\]
In particular, for $N\ge m$ one has $|\mathsf{S}_N (q)|\ge |\mathsf{S}_{N-m}(q-m)|$.
\end{corollary}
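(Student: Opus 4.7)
The plan is to use the join-shift identity from Lemma~\ref{lem:join-shift} to transport the evaluation spectrum at $q-m$ on $n$-vertex graphs into a subset of the evaluation spectrum at $q$ on $(n+m)$-vertex graphs, by sending $G \mapsto G \vee K_m$.

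First, I would fix representatives $G_1,\dots,G_k$ of $n$-vertex graphs realizing the $k=|\mathsf{S}_n(q-m)|$ distinct values $P_{G_i}(q-m)$. For each such graph, $G_i \vee K_m$ has exactly $n+m$ vertices, so $P_{G_i \vee K_m}(q) \in \mathsf{S}_{n+m}(q)$. By Lemma~\ref{lem:join-shift},
\[
P_{G_i \vee K_m}(q) \;=\; (q)_m \cdot P_{G_i}(q-m).
\]
Since $(q)_m \neq 0$ by hypothesis, multiplication by $(q)_m$ is an injection $\mathbb{R} \to \mathbb{R}$, so the $k$ values $P_{G_i \vee K_m}(q)$ are pairwise distinct. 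This exhibits at least $|\mathsf{S}_n(q-m)|$ distinct elements of $\mathsf{S}_{n+m}(q)$, giving the desired inequality. The second statement is just the change of variable $N = n+m$, valid as soon as $N \ge m$ so that $n = N-m \ge 0$ (with the case $n=0$ being trivial since $\mathsf{S}_0(q-m) = \{1\}$ and $|\mathsf{S}_m(q)| \ge 1$).

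There is essentially no obstacle: the content is packaged entirely inside Lemma~\ref{lem:join-shift}, and the only ingredient beyond it is the injectivity of scalar multiplication by the nonzero constant $(q)_m$. The sole cosmetic point is to verify the boundary case $n \ge 1$ (or equivalently $N \ge m$) so that the claim is stated for a nonempty regime, which requires no substantive work.
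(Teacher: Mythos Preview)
Your proof is correct and follows essentially the same approach as the paper: both use the map $G\mapsto G\vee K_m$ together with Lemma~\ref{lem:join-shift} and the injectivity of multiplication by the nonzero scalar $(q)_m$. Your version adds a bit more detail (explicit representatives and the boundary case $n=0$), but the argument is the same.
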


\begin{proof}
Define $\Phi(G)\coloneqq G\vee K_m$, which maps $n$--vertex graphs to $(n+m)$-vertex graphs.
By Lemma~\ref{lem:join-shift},
\[
P_{\Phi(G)}(q)=P_{G\vee K_m}(q)=(q)_m\,P_G(q-m).
\]
Since $(q)_m\neq 0$, the map $y\mapsto (q)_m y$ is injective on real values, so distinct values of $P_G(q-m)$
yield distinct values of $P_{G\vee K_m}(q)$.
\end{proof}

\subsection{Exponential growth for every non-integer \texorpdfstring{$q>0$}{q > 0}}
We can then prove the following lower bound for the chromatic polynomial evaluation spectra at non-integer positive numbers. 
\begin{theorem}\label{thm:noninteger-positive}
Let $q>0$ be a non-integer real and set $m\coloneqq \lceil q\rceil$.
Then $q-m\in(-1,0)$ and $(q)_m\neq 0$, and for all $n\ge m+2$,
\[
|\mathsf{S}_n (q)| \ \ge\ 2^{(n-m-2)/2}.
\]

\end{theorem}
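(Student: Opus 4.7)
The plan is to prove Theorem~\ref{thm:noninteger-positive} by a direct reduction to Theorem~\ref{thm:negative} via the clique-join identity from Corollary~\ref{cor:spectral-embed}. There is no new combinatorial or algebraic input required; the whole statement assembles from earlier pieces of the section.

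First, I would verify the two preliminary claims about $q$ and $m$. Writing $m = \lceil q \rceil$ with $q \notin \mathbb{Z}$, we have $m - 1 < q < m$, so $q - m \in (-1, 0)$. For the falling factorial, $(q)_m = q(q-1)\cdots(q-m+1)$ vanishes iff $q \in \{0, 1, \ldots, m-1\} \subset \mathbb{Z}$, which is ruled out since $q$ is non-integer; hence $(q)_m \neq 0$.

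Next, I would apply Corollary~\ref{cor:spectral-embed} with this choice of $q$ and $m$: for all $n \ge m + 2$ (in particular $n \ge m$), we get
\[
|\mathsf{S}_n(q)| \;\ge\; |\mathsf{S}_{n-m}(q - m)|.
\]
Since $q - m < 0$, Theorem~\ref{thm:negative} applies to the point $q - m$, yielding (for $n - m \ge 2$, i.e., $n \ge m + 2$) the bound
\[
|\mathsf{S}^{\pl}_{n-m}(q - m)| \;\ge\; 2^{(n-m-2)/2}.
\]
Combining this with the trivial inequality $|\mathsf{S}_{n-m}(q-m)| \ge |\mathsf{S}^{\pl}_{n-m}(q-m)|$ yields the claimed lower bound $|\mathsf{S}_n(q)| \ge 2^{(n-m-2)/2}$.

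Since each step is a direct invocation of a result already established, there is no genuine obstacle; the only thing to be careful about is the indexing and the range of validity of $n$, so that the negative-evaluation theorem can be applied to $n - m$ vertices. I would present the proof in exactly the three short steps above.
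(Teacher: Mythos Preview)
Your proposal is correct and follows essentially the same approach as the paper: verify $q-m\in(-1,0)$ and $(q)_m\neq 0$, apply Corollary~\ref{cor:spectral-embed}, then invoke Theorem~\ref{thm:negative} at the shifted point $q-m$ together with the trivial inequality $|\mathsf{S}_{n-m}(q-m)|\ge|\mathsf{S}^{\pl}_{n-m}(q-m)|$. The paper's proof is identical in structure and content.
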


\begin{proof}
Since $q$ is not an integer, none of the factors $q, q-1, \dots, q-m+1$ vanishes, hence $(q)_m\neq 0$.
Also $m-1<q<m$ implies $q-m\in(-1,0)$.

Apply Corollary~\ref{cor:spectral-embed} with this $m$ and use Theorem~\ref{thm:negative} at the negative point $q-m$:
for $n\ge m+2$ we have $n-m\ge 2$ and hence
\[
|\mathsf{S}_n(q)| \ \ge\ |\mathsf{S}_{n-m}(q-m)|
\ \ge\ |\mathsf{S}^{\pl}_{n-m}(q-m)|
\ \ge\  2^{(n-m-2)/2}.
\qedhere \]
\end{proof}

\begin{remark}
The trick to adjoin a clique discussed in this section works well to show Theorem \ref{thm:noninteger-positive}, but it does not allow us to control certain properties of the resulting graph. In fact, if one applies the operation $B$ twice or more within a sequence of operations, then the resulting graph contains a $K_{2,3}$-subdivision, so joining by just one vertex destroys planarity.   
    \end{remark}

\section{The planar evaluation spectrum for \texorpdfstring{$q > 2$}{q > 2}}\label{sec:integers}

In this section, we prove the last bound of Theorem \ref{thm:main} for $\mathsf{S}_n(q)$ by showing the following
\begin{theorem}\label{thm:q_greater_than_2}
    Let $q > 2$ be a real number and $n \geq 3$, then
    \[\abs{\mathsf{S}^{\pl}_n(q)} \geq \sqrt{F_{n-2}} =\Theta(\varphi^{n/2}),\]
    where $(F_n)_n$ is the Fibonacci sequence and $\varphi = (1+\sqrt{5})/2$ is the golden ratio. In particular, we get the bound for integers $q \geq 3$.
\end{theorem}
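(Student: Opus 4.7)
The plan is to mirror the framework of Section~\ref{sec:negative}: keep the two basic operations $S$ and $B$ and the square-root lemma, but work with the unadjusted feasible vector
\[v(G,e) \;=\; \binom{P_{G/e}(q)}{P_{G-e}(q)},\]
since $P_G(q)$ no longer has a uniform sign for $q>2$. The square-root lemma carries over essentially verbatim because its only input is the ordinary deletion--contraction identity $P_G(q) = P_{G-e}(q) - P_{G/e}(q)$: among $N$ distinct $n$-feasible vectors $(t_i,u_i)$, either $\sqrt{N}$ of the $u_i=P_{G_i-e_i}(q)$ are distinct, or some value $u$ occurs $\sqrt{N}$ times, in which case the distinct $t_i$ force $u-t_i=P_{G_i}(q)$ to be distinct. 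So it suffices to exhibit at least $F_{n-2}$ distinct $n$-feasible vectors coming from $n$-vertex planar graphs.

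Rerunning the leaf and deletion--contraction calculations of Lemma~\ref{lem:matrices-neg} for $q>2$ yields
\[M_S \;=\; \bigl(\begin{smallmatrix} -1 & 1 \\ 0 & q-1 \end{smallmatrix}\bigr)\qquad\text{and}\qquad M_B \;=\; \bigl(\begin{smallmatrix} q-1 & 0 \\ 1 & q-2 \end{smallmatrix}\bigr).\]
The negative entry in $M_S$ is the essential obstruction: the induced ratio map $r \mapsto (1-r)/(q-1)$ is \emph{decreasing}, so it does not preserve any one-sided interval around its fixed point $1/q$, and a direct ping-pong between $S$ and $B$ alone cannot work. I bypass this by \emph{doubling} $S$: the block $\beta_2 = S \circ S$ has non-negative matrix $M_{SS} = M_S^2 = \bigl(\begin{smallmatrix} 1 & q-2 \\ 0 & (q-1)^2 \end{smallmatrix}\bigr)$ and induces an increasing ratio map, and together with the size-$1$ block $\beta_1 = B$ it can be ping-ponged cleanly.

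Starting from $(K_3,e)$ gives $v_0 = \binom{q(q-1)}{q(q-1)^2}$ and base ratio $r_0 = 1/(q-1)$, which lies in the interval $I := (1/q,\,1)$ for $q>2$. A direct computation shows that the ratio maps
\[g_B(r) = \frac{(q-1)r}{r+q-2}\qquad\text{and}\qquad g_{SS}(r) = \frac{r+q-2}{(q-1)^2}\]
satisfy $g_B(I) = (1/(q-1),\,1)$ and $g_{SS}(I) = (1/q,\,1/(q-1))$: two disjoint subintervals of $I$, neither of which contains $r_0$. Strict monotonicity of $g_B$ and $g_{SS}$, together with this image disjointness, allow a standard strip-off-the-last-block induction to conclude that distinct words in $\{\beta_1,\beta_2\}^\ast$ produce distinct ratios starting from $r_0$, and therefore distinct feasible vectors.

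The blocks $\beta_1,\beta_2$ preserve planarity and add $1$ and $2$ vertices respectively. Hence block-words producing an $n$-vertex graph from $(K_3,e)$ correspond bijectively to compositions of $n-3$ into parts of sizes $1$ and $2$, which are enumerated by the Fibonacci number $F_{n-2}$. This yields $F_{n-2}$ distinct $n$-feasible vectors, and the square-root lemma delivers $|\mathsf{S}_n^{\pl}(q)| \geq \sqrt{F_{n-2}} = \Theta(\varphi^{n/2})$. The main technical obstacle is exactly the asymmetry forced by the sign change in $M_S$: doubling $S$ into a size-$2$ block restores positivity and a workable ping-pong, but this doubling converts the would-be base-$2$ count of Section~\ref{sec:negative} into a Fibonacci count.
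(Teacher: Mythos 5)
Your proposal is correct and follows essentially the same route as the paper: the same attainable vectors $\binom{P_{G/e}(q)}{P_{G-e}(q)}$, the same matrices for $S$ and $B$, the same blocks $B$ and $S^2$ ping-ponged on the interval $(1/q,1)$ starting from $K_3$, and the same Fibonacci count of compositions of $n-3$ into parts $1$ and $2$. The only cosmetic difference is that you re-derive the square-root lemma directly from deletion--contraction, whereas the paper reduces it to the negative-evaluation version via a diagonal sign matrix.
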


\subsection{Attainable vectors}
\label{sec:attainable_int}
Let $G$ be a planar graph and $e$ one of its edges. Define the \emph{attainable vector} witnessed by $(G,e)$ as
\[
w_q{(G,e)} \coloneqq
\begin{pmatrix}
P_{G/e}(q)\\[2pt]
P_{G-e}(q)
\end{pmatrix}.
\]
We write $w(G,e) = w_q(G,e)$ when $q$ is clear from context.
We call $w$ an \emph{$n$--attainable} vector if there exists an $n$--vertex planar graph $G$ and a edge $e$ of $G$ such that $w = w(G,e)$. 

We remark that
\begin{equation}\label{eq:feasible-to-attainable}
    w_q(G,e) = \begin{pmatrix}
P_{G/e}(q)\\[2pt]
P_{G-e}(q)
\end{pmatrix}= \begin{pmatrix}
(-1)^{|V(G)|-1}Z_{G/e}(-q)\\
(-1)^{|V(G)|}Z_{G-e}(-q)
\end{pmatrix} = \begin{pmatrix}
    (-1)^{|V(G)|-1}&0\\0&(-1)^{|V(G)|}
\end{pmatrix}v_{-q}(G,e),
 \end{equation}
where $v_{-q}(G,e)$ is the feasible vector witnessed by $(G,e)$ with parameter $\lambda = -q$. In what follows, let
\begin{equation*}
        A_n =\begin{pmatrix}
            (-1)^{n-1}&0\\0&(-1)^n
        \end{pmatrix}.
    \end{equation*}
With this remark in hand, we can show a couple of lemmas analogous to those found in Section~\ref{sec:negative}. 

\begin{lemma}[Square-root lemma]\label{lem:sqrt-pos}
If there are at least $N$ distinct $n$--attainable vectors, then
\[
|\mathsf{S}^{\pl}_n(q)| \;\ge\; \sqrt{N}.
\]
\end{lemma}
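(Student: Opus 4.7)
The plan is to adapt the pigeonhole dichotomy from the proof of Lemma~\ref{lem:sqrt-neg} essentially verbatim, substituting the original subtractive deletion--contraction identity \eqref{eq:dc-intro} for the additive identity \eqref{eq:dcplus} that was used there. Concretely, start with $N$ distinct $n$--attainable vectors $\binom{t_i}{u_i}$ together with witnessing pairs $(G_i,e_i)$, where each $G_i$ is a planar graph on $n$ vertices and $e_i\in E(G_i)$.

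First, I would case-split on how many distinct values appear among the second coordinates $u_i = P_{G_i-e_i}(q)$. If at least $\sqrt{N}$ of the $u_i$ are pairwise distinct, then since edge deletion preserves both planarity and the vertex count, each such $u_i$ lies in $\mathsf{S}^{\pl}_n(q)$, and we are done. Otherwise, by pigeonhole some common value $u$ is attained by at least $\sqrt{N}$ of the indices. Restricting to these indices, distinctness of the full vectors $\binom{t_i}{u}$ forces distinctness of the first coordinates $t_i$, so the differences $u-t_i$ are pairwise distinct. By \eqref{eq:dc-intro} applied to $(G_i,e_i)$,
\[
P_{G_i}(q) \;=\; P_{G_i-e_i}(q) - P_{G_i/e_i}(q) \;=\; u - t_i,
\]
which produces $\sqrt{N}$ distinct values in $\mathsf{S}^{\pl}_n(q)$ coming from $n$-vertex planar graphs.

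There is essentially no serious obstacle here: the result is a direct transcription of Lemma~\ref{lem:sqrt-neg} into the positive-evaluation setting. The only point worth flagging is that one need not route through the conversion \eqref{eq:feasible-to-attainable} between attainable and feasible vectors---the dichotomy works cleanly with the raw chromatic polynomial, since we only use (i) that edge deletion preserves the ambient class of $n$--vertex planar graphs, and (ii) the deletion--contraction recurrence, which here supplies differences rather than sums but is equally effective at producing $\sqrt{N}$ distinct values of $P_{G_i}(q)$.
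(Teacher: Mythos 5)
Your proof is correct. The paper's own proof of Lemma~\ref{lem:sqrt-pos} is a two-line reduction: since the sign-change matrix $A_n$ in \eqref{eq:feasible-to-attainable} is invertible, $N$ distinct $n$--attainable vectors give $N$ distinct $n$--feasible vectors (for the parameter $\lambda=-q$), and one then invokes Lemma~\ref{lem:sqrt-neg}. You instead inline the square-root dichotomy directly on the attainable vectors, using the subtractive identity \eqref{eq:dc-intro} in place of the additive identity \eqref{eq:dcplus}; the pigeonhole structure is identical, and every step checks out (edge deletion preserves planarity and the vertex count, and distinct vectors with equal second coordinate force distinct differences $u-t_i=P_{G_i}(q)$). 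Your route is marginally more self-contained: the paper's reduction tacitly applies Lemma~\ref{lem:sqrt-neg}, which was stated in a section where $q<0$ is fixed, at the parameter $\lambda=-q<0$; this is harmless because its proof only uses the polynomial identity \eqref{eq:dcplus}, but your direct argument sidesteps the point entirely. The trade-off is a few extra lines of repetition versus reuse of the earlier lemma.
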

\begin{proof}
    Since $A_n$ is invertible, there are at least $N$ distinct $n$-feasible vectors. By Lemma~\ref{lem:sqrt-neg}, $|\mathsf{S}^{\pl}_n(q)| \ge \sqrt{N}.$
\end{proof}
The same basic operations $B$ and $S$ defined in Section~\ref{sec:negative} will be the building blocks of our construction of exponentially many $n$--attainable vectors. 
\begin{lemma}[Matrix action]\label{lem:matrices-pos}
Let $e$ be an edge of a planar graph $G$, and write $w(G,e)=\binom{t}{u}$.
Then
\[
(w\circ S)(G,e)
=
\begin{pmatrix}
-1 & 1\\
0 & q-1
\end{pmatrix}
\binom{t}{u},
\qquad
(w\circ B)(G,e)
=
\begin{pmatrix}
q-1 & 0\\
1 & q-2
\end{pmatrix}
\binom{t}{u}.
\]
\end{lemma}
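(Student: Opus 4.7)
The plan is to reduce this to Lemma~\ref{lem:matrices-neg} via the identity~\eqref{eq:feasible-to-attainable}. For an $n$-vertex planar graph $G$ and edge $e$, we have $w_q(G,e) = A_n\, v_{-q}(G,e)$, and the proof of Lemma~\ref{lem:matrices-neg} relies only on the deletion--contraction formula and the leaf identity, both of which are polynomial identities. Hence the matrix formulas there are valid for every real $\lambda$, not just $\lambda > 0$; in particular they apply at $\lambda = -q$.

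The key observation is that both $S$ and $B$ add exactly one vertex, so the parity of $|V(G)|$ flips at each step. Consequently $A_{n+1} = -A_n$, and since $A_n$ is diagonal with entries $\pm 1$ we also have $A_n^{-1} = A_n$. Writing $M$ for the feasible-vector matrix from Lemma~\ref{lem:matrices-neg} at $\lambda = -q$, if $v' = Mv$ then
\[
w' \;=\; A_{n+1}\, v' \;=\; A_{n+1}\, M\, A_n^{-1}\, w \;=\; -\,A_n M A_n\, w.
\]
Conjugation by $A_n$ negates the off-diagonal entries of $M$, so the net effect is that the matrix acting on $w$ is obtained from $M$ by negating its diagonal entries while leaving the off-diagonal entries fixed. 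Plugging $\lambda = -q$ into the $S$-matrix gives $\begin{pmatrix} 1 & 1 \\ 0 & 1-q \end{pmatrix}$, and the recipe yields $\begin{pmatrix} -1 & 1 \\ 0 & q-1 \end{pmatrix}$; doing the same for $B$ turns $\begin{pmatrix} 1-q & 0 \\ 1 & 2-q \end{pmatrix}$ into $\begin{pmatrix} q-1 & 0 \\ 1 & q-2 \end{pmatrix}$, and both match the statement.

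There is no real obstacle beyond keeping the signs straight. If one prefers a direct combinatorial route, one can instead repeat the computations from the proof of Lemma~\ref{lem:matrices-neg} verbatim without the sign normalization, using only deletion--contraction $P_G = P_{G-e} - P_{G/e}$, the leaf identity $P_H(x) = (x-1) P_G(x)$, and the polynomial identity~\eqref{eq:polyid} evaluated at $x = q$; this also produces the two matrices in a few lines.
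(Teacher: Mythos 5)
Your proposal is correct and follows the same route as the paper: conjugating the matrices of Lemma~\ref{lem:matrices-neg} at $\lambda=-q$ by the sign matrices via~\eqref{eq:feasible-to-attainable}, using $A_{n+1}=-A_n$ and $A_n^{-1}=A_n$. Your explicit remark that the negative-case lemma is a polynomial identity valid for all $\lambda$ (not just $\lambda>0$) is a point the paper leaves implicit, and your sign bookkeeping (net effect: negate the diagonal, keep the off-diagonal) checks out for both $S$ and $B$.
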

\begin{proof} Let $n = |V(G)|$ and observe that Lemma~\ref{lem:matrices-neg} and \eqref{eq:feasible-to-attainable} imply
    \[(w\circ S)(G,e)
=A_{n+1}\begin{pmatrix}
1 & 1\\
0 & -q+1
\end{pmatrix}A_n^{-1}
\binom{t}{u} = \begin{pmatrix}
-1 & 1\\
0 & q-1
\end{pmatrix}
\binom{t}{u}\]
and 
\[(w\circ B)(G,e)
=A_{n+1}\begin{pmatrix}
-q+1 & 0\\
1 & -q+2
\end{pmatrix}A_n^{-1}
\binom{t}{u} = \begin{pmatrix}
q-1 & 0\\
1 & q-2
\end{pmatrix}
\binom{t}{u}. \] 

\end{proof}

As before, we will interchangeably write, say, $S$ for the operation and the corresponding matrix. 

\subsection{Ping--pong on ratios and exponential growth}
In order to prove the case where $q\ge 3$ is an integer in Theorem~\ref{thm:main}, we will follow a similar strategy to the negative evaluation case. We choose an initial graph $G_0=K_3$, and show that operations $B$ and $D=S^2$, the concatenation of two consecutive $S$ operations, act on attainable vector ratios with disjoint images. A simple calculation then shows that there exist exponentially many $n$--attainable vectors.

Recall that for $v=\binom{x}{y}$ with $y \neq 0$, we have defined $r(v)\coloneqq x/y$. The following definition is a direct analogy of Definition~\ref{def:feasible_generated}.

\begin{definition}
 Let $a = (a_1,\dots,a_t)$ be a word in the semigroup $\{B,D\}^\ast $, $G$ be a planar graph, and $e$ be an edge of $G$. Then define 
      \[\mathbf{w}(a,G,e) \coloneqq (w\circ a_t\circ a_{t-1}\circ\dots\circ a_1)(G,e),\]
     the attainable vector witnessed by the pair generated through the sequence of operations encoded by $a$ starting with the pair $(G,e)$.
\end{definition}
We can now prove a lemma analogous to Lemma~\ref{lem:pingpong-neg}.
\begin{lemma}\label{lem:pingpong-pos}
Let $q>2$ and let $w_0\coloneqq w(K_3,e)$ where $e$ is one of the edges of $K_3$.
Then $w_0=\binom{q(q-1)}{q(q-1)^2}$ and $r(w_0)=1/(q-1)$.
Moreover, on ratios, the operations $B$ and $D =S^2$ act by
\[
f_B(r)=\frac{(q-1)r}{r+q-2},\qquad f_D(r)=\frac{r+q-2}{(q-1)^2}.
\]
Let $I\coloneqq (1/q,1)$. Then $f_B(I)\subseteq \bigl(1/(q-1),\,1)$ and $f_D(I)\subseteq \bigl(1/q,\,1/(q-1)\bigr)$,
so the images are disjoint and contained in $I$.
Consequently, if $w\neq w'$ are words in the semigroup $\{B,D\}^\ast$, then $r(\mathbf{w}(w,K_3,e))\neq r(\mathbf{w}(w',K_3,e))$.
\end{lemma}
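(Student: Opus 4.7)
My plan is to verify each clause by direct computation and then deduce the non-collision of ratios via a standard ping--pong induction. The base computation is routine: contracting an edge of $K_3$ produces $K_2$ (together with a pair of parallel edges that do not affect the chromatic polynomial), so $P_{K_3/e}(q) = q(q-1)$; deleting the edge produces the path on three vertices, so $P_{K_3-e}(q) = q(q-1)^2$. Thus $w_0 = \binom{q(q-1)}{q(q-1)^2}$ and $r(w_0) = 1/(q-1)$. The formula for $f_B$ is read off directly from the matrix in Lemma~\ref{lem:matrices-pos}. To obtain $f_D$, I would square the matrix for $S$:
\[
\begin{pmatrix} -1 & 1 \\ 0 & q-1 \end{pmatrix}^{2} = \begin{pmatrix} 1 & q-2 \\ 0 & (q-1)^{2} \end{pmatrix},
\]
which yields $f_D(r) = (r+q-2)/(q-1)^{2}$.

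Next, I would check strict monotonicity and endpoint values on $I$. Since $q>2$, a short derivative computation gives $f_B'(r) = (q-1)(q-2)/(r+q-2)^{2} > 0$ and $f_D'(r) = (q-1)^{-2} > 0$, so both maps are strictly increasing. Direct evaluation at the endpoints yields $f_B(1/q) = 1/(q-1)$, $f_B(1) = 1$, $f_D(1/q) = 1/q$, and $f_D(1) = 1/(q-1)$; hence $f_B(I) = (1/(q-1), 1)$ and $f_D(I) = (1/q, 1/(q-1))$, two open intervals that are disjoint and both contained in $I$.

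For the ratio-injectivity statement I would proceed by induction on $|w|+|w'|$. The key geometric observation is that $r(w_0) = 1/(q-1)$ is the common boundary point of $f_B(I)$ and $f_D(I)$, hence lies in neither open image. Therefore, if exactly one of $w, w'$ is empty, one ratio equals $1/(q-1)$ while the other lies in $f_B(I) \cup f_D(I)$, so they differ. If both are nonempty, compare their last letters: when these differ, the two ratios land in the disjoint images $f_B(I)$ and $f_D(I)$; when they agree, strict monotonicity of $f_B, f_D$ gives injectivity on $I$, so cancelling the common last letter produces two shorter words with equal ratios, and the inductive hypothesis forces them to be equal, contradicting $w \neq w'$. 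The main point requiring care is confirming that $r(w_0)$ is exactly the separating point between the two images rather than a point inside one of them, and this is precisely what the boundary computations $f_B(1/q) = 1/(q-1) = f_D(1)$ deliver.
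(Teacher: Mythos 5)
Your proposal is correct and follows essentially the same route as the paper: the same base computation on $K_3$, the same matrix-squaring to get $f_D$, the same monotonicity-plus-endpoint argument for the disjoint images, and the same cancellation/induction for the ping--pong conclusion (the paper invokes invertibility of the matrices where you invoke injectivity of $f_B,f_D$ on $I$, which is equivalent here). Your explicit observation that $r(w_0)=1/(q-1)$ is the common boundary point of the two open images is exactly the paper's reason why an empty word cannot collide with a nonempty one.
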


\begin{proof}
The base computation is immediate: $K_3/e$ is a double edge, so $P_{K_3/e}(q)=q(q-1)$; $K_3-e$ is the path on three vertices, so $P_{K_3-e}(q)=q(q-1)^2$. Notice that $r(w_0)\in I$.

From Lemma~\ref{lem:matrices-pos}, if $w=\binom{x}{y}$ then
\[
Bw=\binom{(q-1)x}{x+ (q-2)y},\qquad Dw=\binom{x +(q-2)y}{(q-1)^2y},
\]
so the ratio maps are $f_B,f_D$.

Now fix $r\in I$, so $1/q<r < 1$.
Then, since $q>2$, $f_B$ is strictly increasing, so
\[f_B(r) > f_B(1/q) = \frac{1}{q-1}\quad\text{and}\quad
f_B(r)< f_B(1) = 1.
\]
Thus $f_B(I)\subseteq (1/(q-1),\,1)$.

Similarly, since $f_D$ is strictly increasing,
\[f_D(r) > f_D(1/q) = \frac{1}{q}\quad\text{and}\quad
f_D(r) < f_D(1)=\frac{1}{q-1},
\]
so $f_D(I)\subseteq (1/q,\,1/(q-1))$. 

Hence, the images are disjoint.

Finally, suppose $r(\mathbf{w}(w,K_3,e))= r(\mathbf{w}(w',K_3,e))$. 
Then their ratios agree and lie in $I$. If at least one of $w,w'$ is empty, say $w'$, then $r(\mathbf{w}(w,K_3,e)) = r(w(K_3,e))=1/(q-1) \not \in f_B(I) \cup f_D(I)$. Hence, $w$ must also be empty. 
If both $w,w'$ are non-empty, by the image disjointness just proved, the last letter of $w$ must equal the last letter of $w'$.
Since both matrices in Lemma~\ref{lem:matrices-pos} are invertible, we can cancel the last letter and iterate to conclude $w=w'$.
\end{proof}

We are now ready to prove Theorem \ref{thm:q_greater_than_2}.

\begin{proof}[Proof of Theorem~\ref{thm:q_greater_than_2}]
    Let $n \geq 3$ and $q > 2$. Consider the set $W$ of words $a \in \set{B,D}^\ast$ such that 
    \begin{equation*}
         b+2d= n-3,
    \end{equation*}
    where $b$ is the number of letters $B$ in $a$, and $d$ is the number of letters $D$ in $a$. Notice that a simple recursive argument shows that $|W| = F_{n-2}$ where $(F_n)_{n}$ is the Fibonacci sequence.
    
    Moreover, notice that every word $w\in W$ yields an $n$--vertex witness and an $n$--attainable vector $\mathbf{w}(w, K_3, e)$.
By Lemma~\ref{lem:pingpong-pos}, these $F_{n-2}$ vectors are all distinct. Applying Lemma~\ref{lem:sqrt-pos} finishes the proof:
\[
|\mathsf{S}^{\pl}_n(q)|\ge \sqrt{F_{n-2}} = \Theta(\varphi^{n/2}).\qedhere
\]
\end{proof}

\section{A planar ping--pong for \texorpdfstring{$q\in(0,2)\setminus\{1\}$}{q in (0,2) - 1}}\label{sec:planar_0_2}

In this section we show that, for every fixed $q\in(0,2)\setminus\{1\}$, one can obtain exponentially many distinct values of $P_G(q)$ already by ranging over simple, planar graphs $G$, thus completing the proof of Theorem~\ref{thm:planar}.
Unlike the clique--join reduction (Section~\ref{sec:noninteger}), the argument here preserves planarity: it only uses the local planar operations $S$ and $B$ from Section~\ref{sec:negative}. We show the following result.

\begin{theorem}\label{thm:planar_0_2}
Let $q\in(0,2)\setminus\{1\}$. Then $|\mathsf{S}^{\mathrm{pl}}_n(q)|$ grows exponentially in $n$, i.e.,
\[
|\mathsf{S}^{\mathrm{pl}}_n(q)| \;=\; 2^{\Omega_{q}(n)}.
\]
\end{theorem}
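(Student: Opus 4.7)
My plan is to extend the ping--pong framework of Section~\ref{sec:integers} to handle the range $q\in(0,2)\setminus\{1\}$ using only the planarity-preserving operations $S$ and $B$. The attainable-vector calculus from Section~\ref{sec:attainable_int} is still available unchanged, and the ratio maps $f_S(r)=(1-r)/(q-1)$ and $f_B(r)=(q-1)r/(r+q-2)$ are M\"obius transformations on $\mathbb{RP}^1$. The difficulty compared to the $q>2$ case is that the matrices $S$ and $B$ from Lemma~\ref{lem:matrices-pos} now have mixed signs: $q-2$ is always negative on this range and $q-1$ can be negative (for $q<1$), so neither $S$ nor $B$ preserves a natural positive cone, and the single operations do not by themselves yield a clean ping--pong on $\mathbb{R}_{>0}$.

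To remedy this, I would work with compound block operations $\Phi_1,\Phi_2$ built from $S$ and $B$, each adding the same fixed number $k$ of vertices to the graph. Natural candidates are blocks of length two or three such as $(\Phi_1,\Phi_2)=(S^2,B^2)$, $(S^2,SB)$, or $(S^3,B^2 S)$; a word of length $m$ in such blocks yields a planar graph on $n\approx mk$ vertices. For each candidate I would compute the $2\times 2$ product matrix and the corresponding M\"obius map explicitly, and then search for an interval $I\subset\mathbb{RP}^1$ together with a starting pair $(G_0,e_0)$ (e.g.\ $(K_3,e)$ or $(K_4,e)$, whose ratios one reads off from $P_{K_3}(q)=q(q-1)(q-2)$ and its contractions) such that $r(w(G_0,e_0))\in I$ and the images $\Phi_1(I), \Phi_2(I)$ are disjoint subintervals of $I$. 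Because both block matrices have determinant equal to a nonzero product of powers of $(q-1)$ and $(q-2)$, the cancellation step used in the proofs of Lemma~\ref{lem:pingpong-neg} and Lemma~\ref{lem:pingpong-pos} applies verbatim, so distinct words $w\neq w'\in\{\Phi_1,\Phi_2\}^m$ produce distinct ratios and hence distinct $n$--attainable vectors. Feeding the resulting $2^m$ vectors into Lemma~\ref{lem:sqrt-pos} yields $|\mathsf{S}^{\pl}_n(q)|\ge 2^{m/2}=2^{\Omega_q(n)}$.

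The main obstacle will be producing such an invariant interval uniformly on all of $(0,2)\setminus\{1\}$. Since the sign of $q-1$ switches at $q=1$, and several fixed points of $f_S$ and $f_B$ collide or escape to infinity as $q\to 1$, I expect to split into the two sub-cases $q\in(0,1)$ and $q\in(1,2)$ and choose the block operations and the interval $I$ differently in each, possibly increasing the block length $k=k(q)$ as $q$ approaches $1$ in order to obtain a strict contraction (which is why the bound degrades to $2^{\Omega_q(n)}$ and not a uniform constant-base exponential). It is consistent with this plan that no choice of block can work at $q=1$ itself, since the matrix $S$ becomes singular there and the evaluation spectrum is provably finite, as noted in the introduction. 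With the two sub-ranges handled separately, Theorem~\ref{thm:planar_0_2} follows and, combined with Theorem~\ref{thm:negative} and Theorem~\ref{thm:q_greater_than_2}, completes the proof of Theorem~\ref{thm:planar}.
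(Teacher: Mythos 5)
Your plan follows essentially the same route as the paper's proof: compound blocks built from $S$ and $B$ acting as M\"obius maps on ratios of attainable vectors, a ping--pong argument on a pair of disjoint intervals, invertibility of the block matrices to cancel letters, and Lemma~\ref{lem:sqrt-pos} to pass from distinct attainable vectors to distinct evaluations. The paper's Lemma~\ref{lem:ping_pong_02} confirms that the search you describe succeeds, with choices close to your candidates: $K=S^{2m}$ and $L=B^2$ starting from $K_4$ or $K_3$, with explicit intervals such as $I=\bigl(-\infty,(q-1)^2/(2q-3)\bigr]$ and $J=\bigl((q-1)^2/(2q-3),0\bigr)$ for $q\in(0,3/2)$, and a separate block $L=B\circ S^2\circ B$ at $q=3/2$. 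The substantive content of the theorem, however, is exactly the step you defer to a ``search'': exhibiting the starting pair, the blocks, and the invariant disjoint intervals, and verifying the three inclusions; as written, your proposal asserts rather than establishes that such a configuration exists, and this verification is the entirety of Lemma~\ref{lem:ping_pong_02}. Two of your guiding heuristics also point slightly the wrong way. The natural case split is at $q=3/2$, where the entry $2q-3$ of the $B^2$ matrix changes sign, not at $q=1$: since only even powers of $S$ are used, the sign of $q-1$ is irrelevant, and for $q\in(0,2)$ one has $0<(q-1)^{2}<1$, so $S^{2m}$ is \emph{expanding} on ratios (large $m$ is needed near the endpoints $q=0,2$, not near $q=1$). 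Also, your requirement that both blocks add the same number of vertices is unnecessary and would constrain the search: the paper's blocks add $2m$ and $2$ vertices respectively, and one simply restricts to balanced words with $t$ copies of each block, retaining $\binom{2t}{t}\ge 2^{2t}/(2t+1)$ distinct attainable vectors, which still gives $2^{\Omega_q(n)}$.
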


As discussed in Section~\ref{sec:intro}, we postpone the quantitative bounds until Remark~\ref{rem:q_no_dependence}. Similar to Theorem~\ref{thm:q_greater_than_2}, the proof of Theorem~\ref{thm:planar_0_2} follows a ping--pong/free-semigroup argument on the ratio of attainable vectors (see Section~\ref{sec:attainable_int} for the definition), but using \emph{higher powers} of $S$ and concatenations of $S$ and $B$.

We begin with the following lemma, which gives the ratio action of a block operation of higher powers of $S$.
\begin{lemma}
    \label{lem:S2_powers}
    For $m \geq 1$ and $q \neq 1$, we have 
    \begin{equation}\label{eq:ratio_S_2m}
        (r \circ w \circ S^{2m})(G,e) = \frac{1}{q} + \frac{(r \circ w)(G,e) - \frac{1}{q}}{(q-1)^{2m}}.
    \end{equation}
\end{lemma}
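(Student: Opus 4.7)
The plan is a direct ratio-iteration argument on the matrix $S^{2m}$. By Lemma~\ref{lem:matrices-pos}, the operation $w\circ S$ is implemented by left multiplication by $S=\begin{pmatrix}-1 & 1\\ 0 & q-1\end{pmatrix}$, so $w\circ S^{2m}$ corresponds to multiplication of $w(G,e)$ by $S^{2m}$. Since matrix action on column vectors descends to the usual fractional-linear action on the ratio $r=x/y$, squaring $S$ gives
\[
S^{2}=\begin{pmatrix}1 & q-2\\ 0 & (q-1)^{2}\end{pmatrix},
\]
which induces on ratios the affine map
\[
g(r)\;=\;\frac{r+q-2}{(q-1)^{2}}.
\]
By composition compatibility, proving \eqref{eq:ratio_S_2m} reduces to showing $g^{m}(r_{0})=\tfrac{1}{q}+(r_{0}-\tfrac{1}{q})/(q-1)^{2m}$, where $r_{0}=(r\circ w)(G,e)$.

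The next step is to locate the unique fixed point of $g$. Solving $g(r)=r$ (which requires $q\neq 1$ so that $(q-1)^{2}\neq 0$) and using the identity $(q-1)^{2}-1=q(q-2)$ immediately gives the unique real solution $r^{\ast}=1/q$. Recentering the variable at this fixed point yields the conjugacy
\[
g(r)-\tfrac{1}{q}\;=\;\frac{r-\tfrac{1}{q}}{(q-1)^{2}},
\]
which is the only piece of algebra with any content; it says that $g$ is conjugate to the pure scaling $s\mapsto s/(q-1)^{2}$ via the translation $r\mapsto r-1/q$.

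Finally, a one-line induction on $m$ (or equivalently iterating the conjugate scaling) gives $g^{m}(r_{0})-\tfrac{1}{q}=(r_{0}-\tfrac{1}{q})/(q-1)^{2m}$, and rearranging produces \eqref{eq:ratio_S_2m}. I do not expect any serious obstacle: the only step requiring even minor thought is spotting that $1/q$ is the fixed point of $g$, after which everything is a two-line calculation. The hypothesis $q\neq 1$ enters in exactly one place, namely to guarantee $(q-1)^{2m}\neq 0$.
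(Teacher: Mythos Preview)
Your proof is correct and follows essentially the same approach as the paper: both identify the ratio action of $S^{2}$ as the affine map $g(r)=(r+q-2)/(q-1)^{2}$ and then iterate it $m$ times. The only cosmetic difference is in how the iteration is carried out---you conjugate $g$ to a pure scaling via its fixed point $1/q$, while the paper multiplies through by $(q-1)^{2i}$ and telescopes; these are two standard and equivalent ways to close a first-order affine recurrence.
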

\begin{proof}
    From the computation in Lemma \ref{lem:pingpong-pos} we see that
    \begin{equation*}
        (r\circ S^2)(u)  = 
        \frac{r(u)+ q-2}{(q-1)^2}
        \end{equation*}
for every attainable vector $u$. In particular, when $u = (w \circ S^{2i})(G,e)$, $i \in \{0,1,\dots,m-1\}$, one obtains
\begin{equation*}
        (q-1)^{2i+2}(r\circ w \circ S^{2i+2})(G,e)  = 
        (q-1)^{2i}(r\circ w \circ S^{2i})(G,e)+ (q-1)^{2i}(q-2).
\end{equation*}
These equations telescope into
\begin{equation*}
            (q-1)^{2m}(r\circ w \circ S^{2m})(G,e)  = 
        (r\circ w)(G,e)+ (q-2)\frac{(q-1)^{2m}-1}{(q-1)^2-1},
\end{equation*}
which implies \eqref{eq:ratio_S_2m}.
\end{proof}

Using these block operations of higher powers of $S$, we can prove the following ping--pong lemma. 
\begin{lemma}\label{lem:ping_pong_02} Let $q \in (0,2)\setminus \{1\}$. Then, there exists a planar graph $G_0$, an edge $e_0$ of $G_0$ and block operations $K=K(q)$ and $L=L(q)$ that are each a sequence of operations in $\{B,S\}$ such that the following is true.  If, for every word $a = (a_1,\dots,a_t)$ in the semigroup $\{K,L\}^\ast $, we let 
     \[\widetilde{\mathbf{w}}(a) \coloneqq (w\circ a_t\circ a_{t-1}\circ\dots\circ a_1)(G_0,e_0),\]
     the attainable vector witnessed by the pair generated through the sequence of operations encoded by $a$ starting with the pair $(G_0,e_0)$, then for all words $w\ne w'$ in the semigroup $\{K,L\}^\ast$ with the same number of letters,  $\widetilde{\mathbf{w}}(w)\neq \widetilde{\mathbf{w}}(w')$.
    
\end{lemma}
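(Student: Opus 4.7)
The plan is to apply a classical ping--pong argument to two composite block operations $K$ and $L$ whose matrix actions on the ratio $r$ of attainable vectors are hyperbolic Möbius transformations with four distinct fixed points on the extended real line $\mathbb{R}\cup\{\infty\}$. I take $K = S^{2m}$ and $L = (BSB)^N$, for $m = m(q)$ and $N = N(q)$ sufficiently large, and $G_0 = K_3$ with an arbitrary edge $e_0$; both blocks preserve planarity since $S$ and $B$ do.

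By Lemma~\ref{lem:S2_powers}, the block $K$ acts on ratios as the affine Möbius map $r \mapsto 1/q + (r-1/q)/(q-1)^{2m}$, which is hyperbolic with attracting fixed point $\infty$, repelling fixed point $1/q$, and contraction rate $(q-1)^{2m}$ toward $\infty$ (arbitrarily small for $m$ large, since $|q-1|<1$). For the block $L$, a direct calculation using Lemma~\ref{lem:matrices-pos} gives $\operatorname{tr}(BSB) = (q-2)^3$ and $\det(BSB) = -(q-1)^3(q-2)^2$, so the characteristic discriminant factors as $(q-2)^2\bigl((q-2)^4 + 4(q-1)^3\bigr)$. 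I would then verify the polynomial inequality $(q-2)^4 + 4(q-1)^3 > 0$ on $(0,2)\setminus\{1\}$ (elementary: the polynomial is positive at $q = 0, 1, 2$, and its unique critical point in $(1,2)$ still yields a positive minimum), establishing uniform hyperbolicity of $BSB$. The $(2,1)$ entry of $BSB$ equals $(q-2)^2 \ne 0$, so $\infty$ is not a fixed point, and a short computation applied to the eigenvector candidate $(1/q, 1)$ reduces to $(q-1)(q-2)=0$, showing that $1/q$ is a fixed point only at $q \in \{1, 2\}$. Hence the two fixed points $p_L^{\pm}$ of $L$ are finite and disjoint from $\{\infty, 1/q\}$, and taking $N$ large makes the contraction rate of $L = (BSB)^N$ arbitrarily small.

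With $\{p_K^{+}, p_K^{-}, p_L^{+}, p_L^{-}\}$ four distinct points, choose pairwise disjoint open neighborhoods $U_K^{\pm}, U_L^{\pm}$; by shrinking them we may assume the starting ratio $r_0 = 1/(q-1)$ (from Lemma~\ref{lem:pingpong-pos}, automatically distinct from $\infty$ and $1/q$) lies outside $U_L^{-}$. For $m, N$ large enough, $K$ sends the complement of $U_K^{-}$ into $U_K^{+}$, and $L$ sends the complement of $U_L^{-}$ into $U_L^{+}$. Thus for any non-empty word $a \in \{K, L\}^{\ast}$, the ratio $r(\widetilde{\mathbf{w}}(a))$ lies in $U_K^{+}$ or $U_L^{+}$ according to the last letter of $a$; since these neighborhoods are disjoint and the block matrices $K, L$ are invertible, the last letter is decodable, and peeling it off and iterating recovers the entire word. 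Hence distinct words of the same positive length yield distinct ratios, and therefore distinct attainable vectors.

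The main technical hurdle is the uniform hyperbolicity of the composite block $BSB$ over $(0,2)\setminus\{1\}$, which boils down to the polynomial inequality $(q-2)^4 + 4(q-1)^3 > 0$ together with the fixed-point separation from $\{\infty, 1/q\}$; both are elementary but require a little care near the endpoints of the interval. A minor secondary obstacle is guaranteeing that $r_0$ avoids the repelling fixed point $p_L^{-}$, which is handled simply by shrinking neighborhoods. Once these algebraic points are in hand, the rest is standard ping--pong machinery.
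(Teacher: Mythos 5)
Your approach is genuinely different from the paper's: instead of exhibiting explicit invariant intervals for concrete blocks (the paper uses $K=S^{2m}$, $L=B^2$ with the intervals cut at $(q-1)^2/(2q-3)$, plus a bespoke block $B\circ S^2\circ B$ at the exceptional point $q=3/2$ where that cut degenerates), you run an abstract north--south ping--pong on $\mathbb{RP}^1$ with $K=S^{2m}$ and $L=(BSB)^N$. Your algebra checks out: $\operatorname{tr}(BSB)=(q-2)^3$, $\det(BSB)=-(q-1)^3(q-2)^2$, the discriminant factors as $(q-2)^2\bigl((q-2)^4+4(q-1)^3\bigr)$ with the bracket positive on $[0,2]$ (its minimum is $\approx 0.34$ near $q\approx 1.33$), the two eigenvalues have distinct absolute values on all of $(0,2)\setminus\{1\}$ (for $q>1$ they have opposite signs and nonzero sum; for $q<1$ they are distinct and of the same sign), and the fixed points of $BSB$ avoid $\infty$ and $1/q$. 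This buys a cleaner, more uniform argument with less case analysis.

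There is, however, one genuine gap, and it sits at exactly the point the paper also has to special-case. Writing $M=BSB=\begin{pmatrix}(q-1)(2-q) & (q-1)(q-2)\\ (q-2)^2 & (q-2)+(q-2)^2(q-1)\end{pmatrix}$, the condition for $r_0=1/(q-1)$ to be a fixed point of $f_M$ reduces to $(q-2)(3-2q)=0$. So at $q=3/2$ your starting ratio $r_0=2$ \emph{is} a fixed point of $BSB$ --- and it is the repelling one: $f_M'(2)=\det M/(1/8)^2=-2$, so $|f_M'(2)|>1$. Consequently your step ``by shrinking them we may assume $r_0$ lies outside $U_L^-$'' is false at $q=3/2$: no shrinking avoids $r_0=p_L^-$, the word $L$ sends $r_0$ to itself rather than into $U_L^+$, and the induction ``the ratio lands in $U_K^+$ or $U_L^+$ according to the last letter'' breaks down. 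The fix is cheap --- for instance, take $G_0=K_4$ at $q=3/2$, where $r_0=1/(q-2)=-2$ and $f_M(-2)=6/7\neq-2$; alternatively, treat $\{p_L^-\}$ as an additional invariant ``pong'' region reachable only by words $L^j$. But as written the argument does not cover $q=3/2$, so you should verify the fixed-point avoidance condition $(q-2)(3-2q)\neq 0$ explicitly and patch that one value.
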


\begin{proof}

We start by remarking that in order to prove that a tuple $(G_0,e_0,K,L)$ satisfies the desired property, it is sufficient to find disjoint sets $I$ and $J$ such that \begin{enumerate}
    \item \label{item:1}$r(w(G_0,e_0)) \in I \cup J$;    \item\label{item:2}$r((w\circ K)(G,e))\in I$, for every edge $e$ of a planar graph $G$ such that $r(w(G,e)) \in I \cup J$;
    \item \label{item:3}$r((w\circ L)(G,e))\in J$, for every edge $e$ of a planar graph $G$ such that $r(w(G,e)) \in I \cup J$.
\end{enumerate}
Indeed, let $w, w' \in \set{K,L}^t$ for some $t \geq 0$ such that $\widetilde{\mathbf{w}}(w)= \widetilde{\mathbf{w}}(w')$. In particular, $(r \circ \widetilde{\mathbf{w}}) (w) = (r \circ \widetilde{\mathbf{w}})(w')$. If $ t= 0$, then we directly get $w = w'$. Otherwise, from \eqref{item:1}--\eqref{item:3} it follows that $(r\circ \widetilde{\mathbf{w}})(w'') \in I \cup J$ for any prefix $w''$ of $w$ or $w'$. Therefore, as $I$ and $J$ are disjoint, $w$ and $w'$ must agree in their last letter by \eqref{item:2} and \eqref{item:3}. As $q \not \in \set{0,1,2}$, $S$ and $B$ correspond to invertible matrices, hence the same is true for $K$ and $L$. Hence, we may invert to cancel the action of the last letter and iterate to conclude that $w = w'$.

We divide the proof into three cases, depending on $q$.

\emph{Case 1:} $q\in (0,3/2)$, $q\ne 1$. Let \[X =(G_0,e_0,K,L,I,J)=
\Big(K_4,e,S^{2m},B^2, \big(- \infty, (q-1)^2 / (2q-3)\big], \big((q-1)^2 / (2q-3),0\big)\Big),\]
where $e_0$ is any edge of $G_0=K_4$ and $m=m(q)$ is an integer to be chosen later.
We now prove that $X$ satisfies properties \eqref{item:1}--\eqref{item:3}.
\begin{enumerate}
    \item $r(w(G_0,e_0)) = 1/(q-2)\in (-\infty,0)=I \cup J$.
    \item By Lemma \ref{lem:S2_powers}, if $r = r(w(G,e)) \in (-\infty,0) = I \cup J$, then $w_k = (w \circ S^{2m})(G,e)$ satisfies
    \begin{equation*}
        r(w_k) = \frac{r}{(q-1)^{2m}} + \frac{1}{q} \of{1 - \frac{1}{(q-1)^{2m}}} < \frac{1}{q} \of{1 - \frac{1}{(q-1)^{2m}}} \overset{m \to \infty}{\longrightarrow} - \infty.
    \end{equation*}
       Hence, for sufficiently large $m$, we get $r(w_k) \in I$.  
    \item From Lemma~\ref{lem:matrices-pos}, we obtain
    \begin{align}\label{eq:B2_matrix}
        (w \circ B^2)(G,e) &= \begin{pmatrix}
            (q-1)^2 & 0 \\
            2q-3 & (q-2)^2 
        \end{pmatrix}
        w(G,e).
    \end{align}
    If $r = r(w(G,e)) \in (-\infty,0) = I \cup J$, then $w_\ell = (w \circ B^2)(G,e)$ satisfies 
    \begin{equation*}
        \frac{(q-1)^2}{2q-3}< r(w_\ell) = \frac{(q-1)^2}{(2q-3) + (q-2)^2 r^{-1}} < 0,
    \end{equation*}
    since $2q - 3 < 0$, $r<0$, and $q \neq 1$. Thus $r(w_\ell) \in J$.
\end{enumerate}

\emph{Case 2:} $q \in (3/2, 2)$. Let \[X =(G_0,e_0,K,L,I,J)=
\Big(K_3,e,S^{2m},B^2,  \big[(q-1)^2 / (2q-3),\infty\big), (1,(q-1)^2 / (2q-3))\Big),\]
where $e_0$ is any edge of $G_0=K_3$ and $m=m(q)$ is an integer to be chosen later.
We now prove that $X$ satisfies properties \eqref{item:1}--\eqref{item:3}.

\begin{enumerate}
    \item $r(w(G_0,e_0)) = 1/(q-1)\in (1,\infty)=I \cup J$.
    \item By Lemma~\ref{lem:S2_powers}, if $r = r(w(G,e)) \in (1,\infty) = I \cup J$, then $w_k = (w \circ S^{2m})(G,e)$ satisfies 
    \begin{equation*}
        r(w_k)= \frac{1}{q} + \frac{r - \frac{1}{q}}{(q-1)^{2m}} \geq \frac{1}{q} + \frac{1 - \frac{1}{q}}{(q-1)^{2m}} \overset{m \to \infty}{\longrightarrow} \infty
    \end{equation*}
    since $1> 1/q$. Hence, for sufficiently large $m$, we get $r(w_k) \in I$.
    \item By \eqref{eq:B2_matrix}, if $r = r(w(G,e)) \in (-\infty,0) = I \cup J$, then $w_\ell = (w \circ B^2)(G,e)$ satisfies
    \begin{equation*}
        1 = \frac{(q-1)^2}{(2q-3) + (q-2)^2}\leq r(w_\ell) = \frac{(q-1)^2}{(2q-3) + (q-2)^2 r^{-1}} < \frac{(q-1)^2}{2q-3}
    \end{equation*}
    since $q>3/2$ and $r>1$. Thus 
    $r(w_\ell)\in J$.
\end{enumerate}
\emph{Case 3:} $q = 3/2$. Let \[X =(G_0,e_0,K,L,I,J)=
\big(K_4,e,S^2,B\circ S^2\circ B, (-\infty,0], (0,1/2]\big),\]
where $e_0$ is any edge of $G_0=K_4$.
We now prove that $X$ satisfies properties \eqref{item:1}--\eqref{item:3}.
\begin{enumerate}
    \item $r(w(G_0,e_0)) = 1/(q-2) = -2\in (-\infty,1/2]=I \cup J$.
    \item By Lemma~\ref{lem:S2_powers}, if $r = r(w(G,e)) \in (-\infty,1/2] = I \cup J$, then \[w_k = (w \circ S^{2})(G,e) =4r-2 \in (-\infty,0] = I.\]
    
    \item From Lemma~\ref{lem:matrices-pos}, we obtain
     \begin{equation*}
        (w \circ L) (G,e) = \begin{pmatrix}
            0 & 1/8 \\
            -1/8 & 5/16
        \end{pmatrix} w(G,e). \end{equation*}
    Thus, if $r = r(w(G,e)) \in (-\infty,1/2]= I \cup J$, then $w_\ell = (w \circ L)(G,e)$ satisfies     \begin{equation*}
        0 < r(w_u) = \frac{2}{5 - 2 r } \leq \frac{1}{2},
    \end{equation*}
    since $r\mapsto 2/(5-2r)$ is strictly increasing in $I\cup J$. Thus 
    $r(w_\ell)\in J$. \qedhere
\end{enumerate}
\end{proof}

We are now ready to prove Theorem~\ref{thm:planar_0_2}.

\begin{proof}[Proof of Theorem~\ref{thm:planar_0_2}]
Let $(G_0,e_0,K,L)$ be given by Lemma~\ref{lem:ping_pong_02} and let $k=k(q)$ and $\ell=\ell(q)$ be the number of basic operations in blocks $K$ and $L$, respectively. Let $t = \smash{ \lfloor \frac{n-|V(G_0|)}{(k+\ell)}\rfloor}$ and notice that $n_0 = |V(G_0)|+t(k+\ell) \le n$. By Lemma~\ref{lem:ping_pong_02}, there are at least
\begin{equation}\label{eq:attainable_02}
\binom{2t}{t} \ge \frac{2^{2t}}{2t+1} = 2^{\Omega_{q}(n)}  
\end{equation}
distinct $n_0$--attainable vectors, since words in $\{K,L\}^\ast$ consisting of $t$ $K$'s and $t$ $L$'s produce distinct $n_0$--attainable vectors. Thus,
\[
|\mathsf{S}^{\mathrm{pl}}_n(q)| \;\ge\; |\mathsf{S}^{\mathrm{pl}}_{n_0}(q)| \;=\;2^{\Omega_q(n)},
\]
where the left inequality comes from the fact that adding a leaf to a connected planar graph preserves planarity and alters the chromatic polynomial by a factor of $x-1$, and the right identity is given by~\eqref{eq:attainable_02} and Lemma~\ref{lem:sqrt-pos}.
\end{proof}
\begin{remark}[The dependence on $q$ or lack thereof]
    \label{rem:q_no_dependence}
    Theorem~\ref{thm:planar_0_2} as proven only gives that for every $q \in (0, 2) \setminus \set{1}$ the existence of $c(q) > 0$ and a positive integer $n_0(q)$ such that $\abs{\mathsf{S}_n^{\pl} (q)} \geq \exp\of{(c(q) +o(1)) n}$. However, a more detailed analysis easily can show that we may choose $c(q)$ to be an absolute constant, namely $1/8$. 
    \begin{itemize}
        \item For $q \in (0,1.458)$, the construction in Case~1 of Lemma~\ref{lem:ping_pong_02} works with $m = 1$. This construction provides the bound  $\abs{\mathsf{S}_n^{\pl} (q)} \geq 2^{n/4-o(n)}$;
        \item For $q \in (1,3/2]$, one can use the construction in Case~3 of Lemma~\ref{lem:ping_pong_02}. This construction provides the bound $\abs{\mathsf{S}_n^{\pl} (q) }\geq 2^{n / 6 - o(n)}$;
        
        \item For $q \in (3/2, 1.53)$, one can slightly modify the construction in Case 3 of Lemma~\ref{lem:ping_pong_02}, picking  \[(G_0,e_0,K,L,I,J)=
\big(K_4,e,S^4,B\circ S^2\circ B, (-\infty,-0.5], (-0.5, 0.56]\big).\]

This choice provides the bound $\abs{\mathsf{S}_n^{\pl} (q) }\geq 2^{n/ 8-o(n)}$;
    \item For $q\in [1.53,2]$, the construction in Case~2 of Lemma~\ref{lem:ping_pong_02} works with $m = 2$. This construction provides the bound $\abs{\mathsf{S}_n^{\pl} (q) }\geq 2^{n / 6 - o(n)}$.
    \end{itemize}
    We leave the details to the interested reader. Notice that Theorems~\ref{thm:negative},~\ref{thm:q_greater_than_2} and~\ref{thm:planar_0_2}, under the observations of Remark~\ref{rem:q_no_dependence}, imply Theorem~\ref{thm:planar}.
\end{remark}
\section{Concluding remarks} \label{sec:conclusion}

\noindent {\bf{Superexponential spectra.}} For every fixed real $q\notin\{0,1,2\}$ we proved that the evaluation spectrum
\[
\mathsf{S}_n(q)\;=\;\{P_G(q):|V(G)|=n\}
\]
grows exponentially in $n$, with explicit lower bounds of different qualities in various regimes. For positive integers $q \not\in \left\{0,1,2\right\}$, we also noted that exponential growth for the evaluation spectrum of $P_{G}(q)$ is best possible (up to the correct base of the exponent), due to the fact that $|\mathsf{S}_n(q)| \leq q^{n}+1$. Nevertheless, at other values of $q$, the size of $\mathsf{S}_n(q)$ could be significantly larger, in principle. For example, when $q=-1$, $|P_{G}(-1)|$ counts the number of acyclic orientations of $G$, so the natural upper bound for $|\mathsf{S}_n(-1)|$ is $|P_{K_n}(-1)|=n!$, rather than $q^n+1$. In this direction, we would like to make the appealing following conjecture for the spectrum of acyclic orientations. 

\begin{conjecture}\label{conj:superexp_negative}
$$|\mathsf{S}_n(-1)|\ \text{is superexponential in\ } n.$$ 
\end{conjecture}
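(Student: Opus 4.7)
The plan is to leverage the Stanley reciprocity identity $|P_G(-1)| = a(G)$ from Proposition~\ref{prop:chromrecip} and to push the ping--pong framework of Sections~\ref{sec:negative}--\ref{sec:planar_0_2} beyond its current two-letter, two-dimensional setup. The lower bounds established there are of the form $2^{\Omega(n)}$ because one tracks a two-dimensional feasible vector, uses a constant-size generating set $\{S,B\}$, and applies a one-dimensional ping--pong on the ratio $r=t/u$; this inherently caps the spectrum count at $K^{\Theta(n)}$ when $K$ is a constant. To reach a superexponential total, one must either enlarge the alphabet together with the gadget sizes, or enlarge the state space itself.

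For the first route, I would introduce a family of local edge operations $\{O_k\}_{k\ge 1}$ where $O_k$ replaces the chosen edge $e=ab$ by a gadget of size $\Theta(k)$ (for instance, gluing $K_{k+1}$ along the edge $ab$), and compute its $2\times 2$ action $M_k$ on the feasible vector $v(G,e)$ at $\lambda=1$ via clique-sum and deletion--contraction; the entries of $M_k$ grow like $k!$, so the induced Möbius maps $f_k$ contract intervals extremely rapidly. A multi-letter ping--pong lemma, in the spirit of Lemmas~\ref{lem:pingpong-neg}, \ref{lem:pingpong-pos}, and~\ref{lem:ping_pong_02}, would place their images $\{f_k(I)\}_{k\le K}$ as pairwise disjoint sub-intervals of a fixed $I\subset\mathbb{R}$. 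For the second route, I would enlarge the state space by tracking a higher-dimensional projection of the chromatic polynomial, say $(Z_G,Z_{G-e_1},\dots,Z_{G-e_r})$ for several marked edges, expanding the ratio space from one to $r$ dimensions and providing substantially more room to fit disjoint ping--pong regions. In either variant the final step is to count words of total vertex cost $n$ over the richer alphabet and apply Lemma~\ref{lem:sqrt-neg} to transfer the count from feasible vectors to $|\mathsf{S}_n(-1)|$.

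The hardest step is the dimensional bottleneck: one-dimensional ping--pong with bounded-entry matrices cannot produce more than exponentially many distinct ratios, so to break the $2^n$ barrier one must either use matrices with superpolynomial growth in a carefully controlled way or commit to a higher-dimensional state space. The first route hinges on a delicate packing estimate for the intervals $\{f_k(I)\}$ given their factorial-scale widths and on the awkward trade-off between alphabet size $K$ and per-letter vertex cost $\Theta(k)$, which in naive parametrizations still caps the word count at $e^{\Theta(n)}$; breaking this would require unusually efficient gadgets that add $O(1)$ vertices while admitting $\omega(1)$ distinct matrix actions. The second route is cleaner conceptually but requires developing the analogue of Lemmas~\ref{lem:matrices-neg}--\ref{lem:pingpong-neg} in $\mathbb{R}^{r+1}$; classical free-semigroup constructions in $\mathrm{GL}_d(\mathbb{Z})$ suggest this should be feasible, but turning them into an \emph{explicit} word-counting lower bound of $n^{\Omega(n)}$, or even $e^{\omega(n)}$, on the evaluation spectrum is where the substantive work lies and is precisely what would settle Conjecture~\ref{conj:superexp_negative}.
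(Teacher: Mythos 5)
The statement you are addressing is stated in the paper as an open conjecture (Conjecture~\ref{conj:superexp_negative} in Section~\ref{sec:conclusion}); the paper offers no proof of it, so there is nothing to compare your argument against except the question of whether it actually closes the problem. It does not: what you have written is a research programme, not a proof, and you concede the decisive steps yourself. Concretely, both of your routes inherit the same counting ceiling that you correctly identify as the obstacle. In the first route, any alphabet of local gadgets in which a gadget on $\Theta(k)$ new vertices is treated as a single letter produces, for total vertex budget $n$, at most as many words as there are compositions of $n$ weighted by the number of gadgets of each size; unless you exhibit superpolynomially many \emph{provably distinguishable} gadgets per size class, this is $e^{O(n)}$. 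In the second route, enlarging the state space to $\mathbb{R}^{r+1}$ with a constant-size generating set still yields only exponentially many words of length $\Theta(n)$, so the dimension of the ping--pong arena is irrelevant to breaking the $2^{\Theta(n)}$ barrier --- the bottleneck is the number of graphs you can certify as having pairwise distinct values of $|P_G(-1)|$, not the geometry used to separate them.

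The genuine missing idea is therefore a mechanism for certifying distinctness of $|P_G(-1)|$ (equivalently, of the number of acyclic orientations) across a \emph{superexponential} family of $n$-vertex graphs, e.g.\ of size $n^{\Omega(n)}$. The deletion--contraction/free-semigroup machinery of Sections~\ref{sec:negative}--\ref{sec:planar_0_2} is structurally incapable of this on its own, because every graph it produces is encoded by a word over a bounded alphabet with per-letter vertex cost bounded below by a positive constant. Any successful approach must either (i) exhibit a superexponential family together with an injective (or almost injective) invariant computable from the construction data that coincides with $|P_G(-1)|$, or (ii) use arithmetic information about the attainable values themselves (for instance, exploiting that $|P_{K_n}(-1)|=n!$ and interpolating densely below it). Neither appears in your proposal, so the conjecture remains open after it.
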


A closely related ``spectrum'' question concerns the unrestrained spanning--tree spectrum $\mathcal{T}(n)$. We discussed the recent works of Chan--Kontorovich--Pak and of Alon--Buci\'c--Gishboliner~\cite{ChanKontorovichPak24trees,ABG} which show that $|\mathcal{T}^{\pl}(n)|$ is exponential in $n$, if one restricts the count to planar graphs. In the unrestricted setting, however, the extremal values are also superexponential (recall Cayley's theorem that $\tau(K_n)=n^{n-2}$), so it is a natural to conjecture that $|\mathcal{T}(n)|$ is also superexponential. In this direction, we would like to reiterate the very appealing \cite[Conjecture 5.2]{ChanKontorovichPak24trees}.

\begin{conjecture}\label{conj:superexp_tau}
$$|\mathcal{T}(n)| = e^{\Omega(n \log n)}.$$
\end{conjecture}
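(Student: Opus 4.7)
The plan is to produce, for each sufficiently large $n$, a family $\mathcal{H}_n$ of $n$-vertex graphs with $|\mathcal{H}_n| = e^{\Omega(n \log n)}$ on which the map $G \mapsto \tau(G)$ is injective. The deletion--contraction framework of the present paper and of \cite{ABG,ChanKontorovichPak24trees} only yields exponentially many witnesses, so any proof of Conjecture~\ref{conj:superexp_tau} must differ qualitatively and exploit the full density of graph space. A natural regime is that of near-complete graphs, where Cayley's identity $\tau(K_n) = n^{n-2}$ already sits at the required superexponential scale.

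A natural family to try is $\mathcal{H}_n = \{ K_n \setminus E(H) : H \in \mathcal{F}_n\}$, where each $H$ is a graph on a subset $S \subset [n]$ of $k$ vertices and $\mathcal{F}_n$ is a carefully selected family. A direct Laplacian computation, using the fact that $L(H)$ and $L(K_n) = nI - J$ commute, gives
\[
\tau(K_n \setminus E(H)) \;=\; n^{\,n - 2 - k + c(H)} \, \prod_{i=1}^{k - c(H)} \bigl(n - \mu_i(H)\bigr),
\]
where $c(H)$ is the number of connected components of $H$ and $\mu_1(H), \ldots, \mu_{k - c(H)}(H)$ are its nonzero Laplacian eigenvalues. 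Thus $\tau(K_n \setminus E(H))$, viewed as a polynomial in $n$, is completely determined by the Laplacian spectrum of $H$. Taking $k = \Theta(\sqrt{n \log n})$ and letting $\mathcal{F}_n$ be a family of pairwise non-Laplacian-cospectral graphs on $k$ vertices produces $e^{\Omega(k^2)} = e^{\Omega(n \log n)}$ distinct polynomials in $n$, each of degree $n-2$.

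The main obstacle, which I expect to be genuinely hard, is closing the gap between ``distinct polynomials'' and ``distinct values at our particular $n$''. Two products $\prod(n - \mu_i)$ can coincide accidentally at a specific integer even when their root multisets differ. To control this I would try either: (i) refining $\mathcal{F}_n$ so that the integer values $\prod(n - \mu_i(H))$ have distinct prime factorizations, using arithmetic of Laplacian minors and controlled algebraic integer norms; or (ii) using multiple ambient hosts — replacing $K_n$ by $K_n$ minus a fixed scaffold $R$ and letting $R$ vary through a small set — so that any two Laplacian spectra of members of $\mathcal{F}_n$ are separated by at least one host. A union bound on the number of bad pairs, combined with the degree bound on each collision polynomial, gives some hope that most choices of $n$ avoid collisions simultaneously.

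Finally, I expect that even after these refinements, a complete proof of Conjecture~\ref{conj:superexp_tau} will require an algebraic or arithmetic non-degeneracy input qualitatively different from the combinatorial ping--pong arguments of the present paper, and the same obstacle will presumably be the true difficulty in Conjecture~\ref{conj:superexp_negative} for acyclic orientation counts. Going past the easy exponential scale produced by local graph operations appears to demand a genuinely new tool — plausibly coming from spectral or arithmetic statistics of graph Laplacians — rather than a sharper bookkeeping of the framework developed here.
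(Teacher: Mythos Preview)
The statement you are attempting is Conjecture~\ref{conj:superexp_tau}, which the paper explicitly leaves \emph{open}: it is reiterated from \cite[Conjecture 5.2]{ChanKontorovichPak24trees} in the concluding remarks, and the paper offers no proof or proof sketch of it. There is therefore nothing in the paper to compare your proposal against.

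As for the proposal itself, you are not claiming to have a proof either, and you are right not to. Your Laplacian formula
\[
\tau(K_n\setminus E(H)) \;=\; n^{\,n-2-k+c(H)}\prod_{i=1}^{k-c(H)}\bigl(n-\mu_i(H)\bigr)
\]
is correct, and the observation that $e^{\Omega(k^2)}$ non-Laplacian-cospectral graphs on $k=\Theta(\sqrt{n\log n})$ vertices yield that many distinct \emph{polynomials} is sound. But, as you yourself flag, the passage from ``distinct polynomials of degree $n-2$'' to ``distinct values at this particular integer $n$'' is the entire difficulty, and neither of your two suggested workarounds is close to a proof. For (i), controlling prime factorizations of $\prod(n-\mu_i(H))$ over a family of size $e^{\Omega(n\log n)}$ would require arithmetic information about Laplacian eigenvalues that is not currently available. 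For (ii), a union bound over pairs must defeat $\binom{e^{\Omega(n\log n)}}{2}$ potential collisions using only polynomially many hosts, and a degree-$(n-2)$ polynomial identity gives at most $n-2$ bad integers per pair; the arithmetic does not close. Your final paragraph is an honest assessment: this is a genuine open problem, and the ping--pong machinery of the paper is, by design, capped at exponential output and cannot reach the conjectured scale.
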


\noindent {\bf{How far does the ping--pong method go?}} 
Our arguments use remarkably little about chromatic polynomials beyond the existence of a two--term deletion--contraction relation
and the ability to realize local operations as linear maps on a small ``boundary'' space.
This suggests that similar results should be possible for a broader class of Tutte--type invariants.
For example, the Tutte polynomial $T_G(x,y)$ (and equivalently the Potts/random--cluster partition function) is universal among
Tutte--Grothendieck invariants and is governed by deletion--contraction~\cite{Tutte1954}.
It would be interesting to study evaluation spectra such as
\[
\{\,T_G(x_0,y_0):|V(G)|=n\,\}
\qquad\text{or}\qquad
\{\,Z_G(q_0,v_0):|V(G)|=n\,\}
\]
at fixed parameters, and to understand when the same feasible--vector/ping--pong mechanism yields exponential (or even superexponential) growth. Here $Z_G(q,v)$ denotes the random–cluster (Potts) partition function, a standard specialization of the Tutte polynomial. In particular, it would be interesting to determine the specializations of the Tutte polynomial (or more general Tutte--Grothendieck invariants) that guarantee
exponential growth of the corresponding evaluation spectrum, as well as the exceptional locus where the spectrum is subexponential.

\end{document}